\documentclass[11pt]{article}

\usepackage[margin=1in]{geometry}
\usepackage[T1]{fontenc}
\usepackage[utf8]{inputenc}
\usepackage{lmodern}
\usepackage{amsmath,amssymb,amsfonts,amsthm,mathtools}
\usepackage{mathrsfs}
\usepackage{booktabs,longtable,array}
\usepackage{enumitem}
\usepackage{microtype}
\usepackage[colorlinks=true,linkcolor=blue,citecolor=blue,urlcolor=blue]{hyperref}
\usepackage[capitalize,nameinlink]{cleveref}

\allowdisplaybreaks
\numberwithin{equation}{section}

\newtheorem{theorem}{Theorem}[section]
\newtheorem{proposition}[theorem]{Proposition}
\newtheorem{lemma}[theorem]{Lemma}
\newtheorem{corollary}[theorem]{Corollary}
\newtheorem{definition}[theorem]{Definition}
\newtheorem{remark}[theorem]{Remark}
\newtheorem{example}[theorem]{Example}

\crefname{theorem}{Theorem}{Theorems}
\Crefname{theorem}{Theorem}{Theorems}
\crefname{proposition}{Proposition}{Propositions}
\Crefname{proposition}{Proposition}{Propositions}
\crefname{lemma}{Lemma}{Lemmas}
\Crefname{lemma}{Lemma}{Lemmas}
\crefname{corollary}{Corollary}{Corollaries}
\Crefname{corollary}{Corollary}{Corollaries}
\crefname{definition}{Definition}{Definitions}
\Crefname{definition}{Definition}{Definitions}
\crefname{remark}{Remark}{Remarks}
\Crefname{remark}{Remark}{Remarks}
\crefname{example}{Example}{Examples}
\Crefname{example}{Example}{Examples}

\DeclareMathOperator{\supp}{supp}

\newcommand{\R}{\mathbb R}

\newcommand{\eps}{\varepsilon}
\newcommand{\loc}{\mathrm{loc}}

\newcommand{\dd}{\,\mathrm d}
\newcommand{\dx}{\,\mathrm dx}
\newcommand{\dt}{\,\mathrm dt}
\newcommand{\dxdt}{\,\mathrm dx\,\mathrm dt}
\newcommand{\dxds}{\,\mathrm dx\,\mathrm ds}
\newcommand{\dy}{\,\mathrm dy}

\newcommand{\calK}{\mathcal K}

\newcommand{\norm}[2]{\left\lVert #1\right\rVert_{#2}}
\newcommand{\abs}[1]{\left\lvert #1\right\rvert}

\newcommand{\pair}[2]{\left\langle #1,#2\right\rangle}
\newcommand{\Pf}{\mathsf P}

\newcommand{\one}{\mathbf 1}

\newcommand{\W}{\mathcal W}
\newcommand{\D}{\mathcal D}
\newcommand{\Lk}{\mathcal L}
\newcommand{\Ek}{\mathcal E}
\newcommand{\Aact}{\mathfrak A}
\newcommand{\Bback}{\mathcal B}
\newcommand{\Cpf}{\mathcal C^{\mathrm{PF}}}

\title{\textbf{Coarse-Grained Resolution and Pressure--Flux Work Depletion for Navier--Stokes CKN Badness}}

\author{Runlong Yu\\
The University of Alabama,
Tuscaloosa, AL 35487, USA\\
\texttt{ryu5@ua.edu}}
\date{}

\begin{document}
\maketitle

\begin{abstract}
We prove a finite-scale coarse-grained decomposition for the three-dimensional incompressible Navier--Stokes equations near the Caffarelli--Kohn--Nirenberg local regularity framework.  The first part is a local resolution lemma for the scale-critical quantity: for every spatial filter length \(\ell>0\),
\[
\Psi(r)\le 4\Psi^\ell(r)+4\Omega^\ell(r),
\]
where \(\Psi^\ell(r)\) is the corresponding coarse-grained velocity--pressure quantity and \(\Omega^\ell(r)\) is the explicitly defined subfilter residual.  Thus a CKN-bad scale is either visible at the resolved level or is carried by unresolved velocity--pressure oscillation.  The second part is an exact fixed-chain depletion theorem for the combined pressure--flux work distribution
\[
G^\ell=\Pi^\ell+\nabla\cdot(P^\ell U^\ell),
\qquad
\Pi^\ell=-R^\ell:\nabla U^\ell,
\]
which is the signed work density appearing in the localized resolved-energy balance.  For finite-dimensional active test families with common endpoint traces, we obtain a constructive active-work extraction and a weighted telescoping inequality: forward combined work and resolved dissipation are paid by the initial localized kinetic energy, explicit localization leakage, and negative combined work/backscatter.  
\end{abstract}

\medskip
\noindent\textbf{Keywords.}
Navier--Stokes equations; suitable weak solutions; partial regularity; Caffarelli--Kohn--Nirenberg theory; coarse graining; Reynolds stress; pressure work; energy flux; subfilter residual; backscatter.

\medskip
\noindent\textbf{2020 Mathematics Subject Classification.}
35Q30, 35B65, 35B45, 76D05, 76F05.

\tableofcontents

\section{Introduction}\label{sec:introduction}

Let \((u,p)\) be a suitable weak solution of the three-dimensional incompressible Navier--Stokes equations
\begin{equation}\label{eq:NS}
\partial_t u-\Delta u+\nabla\cdot(u\otimes u)+\nabla p=0,
\qquad \nabla\cdot u=0.
\end{equation}
The weak-solution framework goes back to Leray and Hopf \cite{Leray1934,Hopf1951}.  The local energy inequality and the partial regularity theory of Scheffer and Caffarelli--Kohn--Nirenberg place the local regularity problem in terms of scale-critical quantities; see \cite{Scheffer1976,CKN1982,Lin1998,Vasseur2007,LemarieRieusset2002,LemarieRieusset2016,Seregin2015}.  A standard CKN quantity in a parabolic cylinder is
\begin{equation}\label{eq:CKN-intro}
r^{-2}\int_{Q_r(z_0)}\left(|u|^3+|p-(p)_{B_r(x_0)}(t)|^{3/2}\right)\dxdt.
\end{equation}
The subtraction of a spatial pressure mean is essential because the pressure is determined only up to an arbitrary function of time.

A different but compatible viewpoint is obtained by spatial coarse graining.  Smooth filters and resolved energy balances are standard tools in the mathematical study of energy transfer and anomalous dissipation; compare \cite{ConstantinETiti1994,DuchonRobert2000,Eyink2005,EyinkAluie2009,DrivasNguyen2018}.  For a filter length \(\ell>0\), set
\[
U^\ell=S_\ell u,
\qquad
P^\ell=S_\ell p,
\qquad
R^\ell=S_\ell(u\otimes u)-U^\ell\otimes U^\ell.
\]
The resolved interscale flux and the combined pressure--flux work distribution are
\begin{equation}\label{eq:G-intro}
\Pi^\ell=-R^\ell:\nabla U^\ell,
\qquad
G^\ell=\Pi^\ell+\nabla\cdot(P^\ell U^\ell).
\end{equation}
The object \(G^\ell\) is the scalar work distribution that appears in the localized resolved-energy identity.  It is signed; it is not a pressure norm, not an unsigned flux norm, and not automatically comparable to the positive CKN quantity \eqref{eq:CKN-intro}.

The first result of this paper is a resolution lemma for \eqref{eq:CKN-intro}.  With \(\Psi(r)\) denoting the full CKN quantity, \(\Psi^\ell(r)\) the corresponding resolved quantity, and \(\Omega^\ell(r)\) the residual carried by \((u-U^\ell,p-P^\ell)\), we prove
\[
\Psi(r)\le 4\Psi^\ell(r)+4\Omega^\ell(r).
\]
Consequently, a full CKN-bad scale is either visible in the resolved velocity--pressure pair or else the subfilter residual is quantitatively large.  This statement is algebraic, gauge invariant, and independent of the Navier--Stokes equation.  Its role is to prevent a detector based on \((U^\ell,P^\ell,R^\ell)\) from being asked to detect information that the filter has removed.

The second result is a finite-chain work depletion theorem for \(G^\ell\).  On a fixed chain of adjacent slabs, finite-dimensional active test families produce a selected nonnegative weight on each slab.  The selected combined work satisfies an explicit extraction lower bound in terms of a finite coefficient norm.  Summing the exact resolved-energy identity over the chain gives a weighted telescoping estimate: forward combined pressure--flux work and resolved dissipation are controlled by initial localized kinetic energy, localization leakage, and negative combined work, recorded as backscatter.

These two results fit together through the following conditional implication:
\[
\boxed{
\Psi(r_k)\ge \varepsilon_0
\Longrightarrow
\Omega^\ell(r_k)>\eta\varepsilon_0
\quad\text{or}\quad
\Psi^\ell(r_k)\ge c_0\varepsilon_0,
}
\]
and, if an independent coarse observability estimate is available,
\[
\boxed{
\Psi^\ell(r_k)\ge c_0\varepsilon_0
\Longrightarrow
\mathfrak A_k(G^\ell)\ge c_{\mathrm{obs}},
}
\]
then the depletion theorem applies to the detected resolved work.  The difficult implication is the second boxed line.  It is not proved here.  It is a separate observability or compactness-rigidity problem and may fail through residual concentration, harmonic pressure tails, pressure--flux cancellation, coherent low-frequency resolved profiles, leakage, or backscatter.

The local pressure decompositions used below rely on standard Calder\'on--Zygmund and harmonic estimates; see \cite{Stein1970}.  Related finite-window reductions and detector formulations appear in \cite{Yu2026InvisibleCascades,Yu2026CriticalLedgers,Yu2026SingularityAudit,Yu2026ComputationalAntiPhantom,Yu2026SharpLocalToClean}; the present paper isolates the unconditional resolution and fixed-chain work identities needed for that interface.

The paper is organized as follows.  \Cref{sec:resolution-main} proves the coarse/residual CKN resolution lemma, finite-chain form, and residual estimates.  \Cref{sec:work-depletion} proves the fixed-chain pressure--flux work depletion theorem.  \Cref{sec:conditional-interface} records the precise conditional bridge between CKN resolution and work detection.  \Cref{sec:diagnostics} gives examples, scaling information, and limitations.

\section{Preliminaries}

\subsection{Parabolic cylinders and pressure projection}

For $z_0=(x_0,t_0)\in\R^3\times\R$ and $r>0$, set
\[
B_r(x_0)=\{x\in\R^3: |x-x_0|<r\},
\qquad
Q_r(z_0)=B_r(x_0)\times(t_0-r^2,t_0).
\]
When $z_0=(0,0)$, we write $B_r=B_r(0)$ and $Q_r=Q_r(0,0)$.
For an integrable function $f$ on $B_r(x_0)$, define
\[
(f)_{B_r(x_0)}(t)=\frac{1}{|B_r|}\int_{B_r(x_0)} f(x,t)\dx.
\]
We also use the pressure projection
\begin{equation}\label{eq:pressure-projection}
\Pf_{r,z_0} f(x,t)=f(x,t)-(f)_{B_r(x_0)}(t),
\qquad (x,t)\in Q_r(z_0).
\end{equation}
At the origin, we write simply $\Pf_r$.

The map $\Pf_{r,z_0}$ is linear.  In particular, if $p=P+q$, then
\begin{equation}\label{eq:pressure-projection-linearity}
\Pf_{r,z_0}p=\Pf_{r,z_0}P+\Pf_{r,z_0}q.
\end{equation}
This elementary identity is the reason the pressure part of the resolution lemma is gauge invariant.

\subsection{Suitable weak solutions}

\begin{definition}[Suitable weak solution]
Let $Q\subset\R^3\times\R$ be a parabolic cylinder.  A pair $(u,p)$ is a suitable weak solution of \eqref{eq:NS} in $Q$ if
\[
u\in L_t^\infty L_x^2(Q)\cap L_t^2H_x^1(Q),
\qquad
p\in L^{3/2}(Q),
\]
$(u,p)$ solves \eqref{eq:NS} in the sense of distributions, $\nabla\cdot u=0$, and the local energy inequality holds: for every nonnegative $\phi\in C_c^\infty(Q)$ and almost every $t$,
\begin{align}
\int |u(x,t)|^2\phi(x,t)\dx
&+2\int_{-\infty}^{t}\int |\nabla u|^2\phi\dxds
\nonumber\\
&\le
\int_{-\infty}^{t}\int |u|^2(\partial_s\phi+\Delta\phi)\dxds
\nonumber\\
&\quad+
\int_{-\infty}^{t}\int (|u|^2+2p)u\cdot\nabla\phi\dxds.
\end{align}
\end{definition}

The resolution lemma below only needs $u\in L^3$ and $p\in L^{3/2}$ locally.  The suitable weak solution framework is included because it is the natural setting in which $\Psi(r)$ is used for local regularity.

\subsection{Spatial coarse graining}

Let $\rho\in C_c^\infty(B_1)$ be a nonnegative radial mollifier satisfying $\int_{\R^3}\rho\dx=1$.  For $\ell>0$, set
\[
\rho_\ell(x)=\ell^{-3}\rho(x/\ell),
\qquad
S_\ell f=\rho_\ell *_x f.
\]
The convolution is in the spatial variables only.  Whenever $Q_r(z_0)$ is under discussion, the fields are assumed to be defined on a spatially enlarged cylinder large enough for the convolution to be evaluated on $Q_r(z_0)$.

For a velocity-pressure pair $(u,p)$, define
\begin{equation}\label{eq:coarse-fields}
U^\ell=S_\ell u,
\qquad
P^\ell=S_\ell p,
\qquad
R^\ell=S_\ell(u\otimes u)-U^\ell\otimes U^\ell.
\end{equation}
The resolved interscale work and the combined pressure--flux work distribution are
\begin{equation}\label{eq:work-defs}
\Pi^\ell=-R^\ell:\nabla U^\ell,
\qquad
G^\ell=\Pi^\ell+\nabla\cdot(P^\ell U^\ell).
\end{equation}
The first main theorem concerns the CKN badness resolved by $(U^\ell,P^\ell)$; the work distribution $G^\ell$ enters in the finite-chain depletion theorem.

\subsection{Full, coarse, and residual CKN quantities}

Fix a cylinder $Q_r(z_0)\Subset Q$ and a coarse length $\ell>0$.  Define the full CKN badness
\begin{equation}\label{eq:Psi-full}
\Psi(z_0,r)=r^{-2}\int_{Q_r(z_0)}\left(|u|^3+|\Pf_{r,z_0}p|^{3/2}\right)\dxdt.
\end{equation}
Define the coarse badness
\begin{equation}\label{eq:Psi-coarse}
\Psi^\ell(z_0,r)=r^{-2}\int_{Q_r(z_0)}\left(|U^\ell|^3+|\Pf_{r,z_0}P^\ell|^{3/2}\right)\dxdt.
\end{equation}
Define the subfilter residual
\begin{equation}\label{eq:Omega-residual}
\Omega^\ell(z_0,r)=r^{-2}\int_{Q_r(z_0)}\left(|u-U^\ell|^3+|\Pf_{r,z_0}(p-P^\ell)|^{3/2}\right)\dxdt.
\end{equation}
At $z_0=(0,0)$ we omit $z_0$ from the notation.

All three quantities are invariant under the Navier--Stokes scaling if the ratio $\ell/r$ is kept fixed.  The full and residual pressure terms are invariant under the transformation $p\mapsto p+a(t)$, and the same is true for the coarse pressure term because $S_\ell$ acts only in space.
\section{Coarse-Grained CKN Resolution}\label{sec:resolution-main}

\subsection{The resolution lemma}

\begin{theorem}[Resolution lemma]\label{thm:resolution}
Let $(u,p)$ be locally integrable with $u\in L^3_{\loc}$ and $p\in L^{3/2}_{\loc}$, and let $(U^\ell,P^\ell)$ be defined by spatial coarse graining as in \eqref{eq:coarse-fields}.  For every cylinder $Q_r(z_0)$ on which the quantities \eqref{eq:Psi-full}--\eqref{eq:Omega-residual} are finite,
\begin{equation}\label{eq:resolution-estimate}
\Psi(z_0,r)\le 4\Psi^\ell(z_0,r)+4\Omega^\ell(z_0,r).
\end{equation}
More precisely,
\begin{align}\label{eq:sharp-resolution-estimate}
\Psi(z_0,r)
&\le
4r^{-2}\int_{Q_r(z_0)}\left(|U^\ell|^3+|u-U^\ell|^3\right)\dxdt
\nonumber\\
&\quad+
2^{1/2}r^{-2}\int_{Q_r(z_0)}\left(|\Pf_{r,z_0}P^\ell|^{3/2}+|\Pf_{r,z_0}(p-P^\ell)|^{3/2}\right)\dxdt.
\end{align}
\end{theorem}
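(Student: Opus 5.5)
The plan is to prove \eqref{eq:sharp-resolution-estimate} pointwise on $Q_r(z_0)$ and then integrate. The coarse estimate \eqref{eq:resolution-estimate} follows because $2^{1/2}\le 4$ and both integrands are nonnegative. No Navier--Stokes structure is used. The only inputs are the decompositions $u=U^\ell+(u-U^\ell)$ and $p=P^\ell+(p-P^\ell)$, together with the linearity of the pressure projection \eqref{eq:pressure-projection-linearity}.

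For the velocity, I apply the elementary convexity inequality $|a+b|^q\le 2^{q-1}(|a|^q+|b|^q)$ with $q=3$. This follows from convexity of $s\mapsto s^q$ on $[0,\infty)$ after first writing $|a+b|\le|a|+|b|$. It gives
\[
|u|^3\le 4\left(|U^\ell|^3+|u-U^\ell|^3\right)
\]
at almost every $(x,t)\in Q_r(z_0)$.

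For the pressure, I first use \eqref{eq:pressure-projection-linearity} with $P=P^\ell$ and $q=p-P^\ell$. This gives $\Pf_{r,z_0}p=\Pf_{r,z_0}P^\ell+\Pf_{r,z_0}(p-P^\ell)$ pointwise. Here the hypotheses $p\in L^{3/2}_{\loc}$ and $P^\ell=S_\ell p$ locally integrable ensure that the spatial means $(\cdot)_{B_r(x_0)}(t)$ exist for a.e.\ $t$, so the identity is meaningful. The same convexity inequality with $q=3/2$ then yields
\[
|\Pf_{r,z_0}p|^{3/2}\le 2^{1/2}\left(|\Pf_{r,z_0}P^\ell|^{3/2}+|\Pf_{r,z_0}(p-P^\ell)|^{3/2}\right).
\]

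Finally, I add the two pointwise bounds, integrate over $Q_r(z_0)$ and multiply by $r^{-2}$, which is exactly \eqref{eq:sharp-resolution-estimate}. Since $2^{1/2}<4$, the right-hand side is at most $4\Psi^\ell(z_0,r)+4\Omega^\ell(z_0,r)$ by definitions \eqref{eq:Psi-coarse} and \eqref{eq:Omega-residual}.

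There is no genuine obstacle. The one point needing care is that the pressure splitting must be done after projection: one must not split $|p-(p)_{B_r}|$ using unprojected $P^\ell$. Linearity of $\Pf_{r,z_0}$ makes this exact, and it is also what keeps the estimate gauge invariant. The finiteness assumption guarantees that all integrals are finite, so no $\infty-\infty$ issues arise.
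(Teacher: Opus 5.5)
Your proposal is correct and follows the paper's proof. You split $u=U^\ell+(u-U^\ell)$ using $|a+b|^3\le 4(|a|^3+|b|^3)$, split the projected pressure via linearity of $\Pf_{r,z_0}$ using $|a+b|^{3/2}\le 2^{1/2}(|a|^{3/2}+|b|^{3/2})$, then integrate, divide by $r^2$, and use $2^{1/2}\le 4$.
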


\begin{corollary}[Coarse visibility below residual threshold]\label{cor:coarse-visibility}
Let $\eps_0>0$ and $0<\eta<1/4$.  If
\[
\Psi(z_0,r)\ge \eps_0
\qquad\text{and}\qquad
\Omega^\ell(z_0,r)\le \eta\eps_0,
\]
then
\begin{equation}\label{eq:coarse-lower-bound}
\Psi^\ell(z_0,r)
\ge
\left(\frac14-\eta\right)\eps_0.
\end{equation}
Equivalently, if $\Psi^\ell(z_0,r)<(1/4-\eta)\eps_0$ while $\Psi(z_0,r)\ge\eps_0$, then $\Omega^\ell(z_0,r)>\eta\eps_0$.
\end{corollary}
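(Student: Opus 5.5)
The statement to prove is \Cref{cor:coarse-visibility}, which follows directly from the resolution estimate \eqref{eq:resolution-estimate} of \Cref{thm:resolution}. I would take that theorem as given and do only the bookkeeping.

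First, all three quantities $\Psi$, $\Psi^\ell$, $\Omega^\ell$ are finite on $Q_r(z_0)$. Finiteness of $\Omega^\ell$ is assumed, since it is bounded by $\eta\eps_0$. If $\Psi^\ell(z_0,r)=+\infty$, the claimed lower bound \eqref{eq:coarse-lower-bound} holds trivially. So assume $\Psi^\ell$ is finite; then \eqref{eq:resolution-estimate} applies. Combining it with the two hypotheses gives
\[
\eps_0\le \Psi(z_0,r)\le 4\Psi^\ell(z_0,r)+4\Omega^\ell(z_0,r)\le 4\Psi^\ell(z_0,r)+4\eta\eps_0 .
\]
Rearranging and dividing by $4$ yields $\Psi^\ell(z_0,r)\ge (1/4-\eta)\eps_0$. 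This is exactly \eqref{eq:coarse-lower-bound}, and the bound is positive because $\eta<1/4$.

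The "equivalently" clause is the contrapositive with the hypothesis $\Psi\ge\eps_0$ kept fixed. Suppose $\Psi(z_0,r)\ge\eps_0$ and $\Psi^\ell(z_0,r)<(1/4-\eta)\eps_0$. If $\Omega^\ell(z_0,r)\le\eta\eps_0$ held, the first part would give $\Psi^\ell\ge(1/4-\eta)\eps_0$, a contradiction. Hence $\Omega^\ell(z_0,r)>\eta\eps_0$. In this direction $\Omega^\ell$ may be infinite, and the conclusion still holds trivially.

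There is no real obstacle here; all the analytic content sits in \Cref{thm:resolution}. That theorem would be proved pointwise: apply $(a+b)^3\le 4(a^3+b^3)$ to the velocity, and $(a+b)^{3/2}\le 2^{1/2}(a^{3/2}+b^{3/2})$ to the pressure via the linearity \eqref{eq:pressure-projection-linearity}. The only care needed in the corollary is the finiteness hypothesis of the theorem, which I handled by the trivial case above.
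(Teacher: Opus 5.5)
Your proof is correct and follows the paper's own argument: chain $\eps_0\le\Psi\le 4\Psi^\ell+4\Omega^\ell\le 4\Psi^\ell+4\eta\eps_0$, rearrange, and read the ``equivalently'' clause as the contrapositive. Your case split for $\Psi^\ell=+\infty$ is harmless but not needed, since \eqref{eq:resolution-estimate} comes from integrating a pointwise inequality between nonnegative functions and so holds in $[0,\infty]$ without any finiteness assumption.
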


\begin{remark}[Minimal hypotheses]
The estimate is independent of the equation.  It remains valid for any $u\in L^3$ and $p\in L^{3/2}$ on the cylinder.  No local pressure formula, no local energy inequality, and no smallness condition on $\ell/r$ is used.
\end{remark}

\subsection{Finite-chain form}

Many local regularity and detector arguments use a finite geometric chain of scales.  Fix $0<\theta<1$ and set
\[
r_k=\theta^k r_0,
\qquad k=0,1,\dots,N.
\]
For a fixed coarse length $\ell$, define the full, coarse, and residual bad-scale sets
\begin{align*}
\mathcal B_{\eps_0}&=\{0\le k\le N-1:\ \Psi(z_0,r_k)\ge\eps_0\},\\
\mathcal B^\ell_{c_0\eps_0}&=\{0\le k\le N-1:\ \Psi^\ell(z_0,r_k)\ge c_0\eps_0\},\\
\mathcal R^\ell_{\eta\eps_0}&=\{0\le k\le N-1:\ \Omega^\ell(z_0,r_k)>\eta\eps_0\}.
\end{align*}

\begin{corollary}[Finite-chain resolution]\label{cor:finite-chain}
Let $0<\eta<1/4$ and set $c_0=1/4-\eta$.  Then
\begin{equation}\label{eq:bad-set-inclusion}
\mathcal B_{\eps_0}\subset
\mathcal B^\ell_{c_0\eps_0}\cup \mathcal R^\ell_{\eta\eps_0}.
\end{equation}
Consequently,
\begin{equation}\label{eq:bad-set-count}
\#\mathcal B_{\eps_0}
\le
\#\mathcal B^\ell_{c_0\eps_0}+
\#\mathcal R^\ell_{\eta\eps_0}.
\end{equation}
More generally, for every nonnegative sequence of weights $w_k$,
\begin{equation}\label{eq:weighted-bad-set}
\sum_{k=0}^{N-1}w_k\one_{\{\Psi(r_k)\ge\eps_0\}}
\le
\sum_{k=0}^{N-1}w_k\one_{\{\Psi^\ell(r_k)\ge c_0\eps_0\}}
+
\sum_{k=0}^{N-1}w_k\one_{\{\Omega^\ell(r_k)>\eta\eps_0\}}.
\end{equation}
\end{corollary}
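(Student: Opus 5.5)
The plan is to derive the finite-chain statement scale by scale from \Cref{cor:coarse-visibility}, and then pass to counts and weighted sums by summation.

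\textbf{Step 1: the inclusion \eqref{eq:bad-set-inclusion}.} Fix $k\in\mathcal B_{\eps_0}$, so that $\Psi(z_0,r_k)\ge\eps_0$. We split into two cases according to the size of the residual at scale $r_k$.
\begin{itemize}[leftmargin=2em]
\item If $\Omega^\ell(z_0,r_k)>\eta\eps_0$, then $k\in\mathcal R^\ell_{\eta\eps_0}$ by definition.
\item Otherwise $\Omega^\ell(z_0,r_k)\le\eta\eps_0$. Then \Cref{cor:coarse-visibility}, applied on the cylinder $Q_{r_k}(z_0)$ with the same $\ell$, gives $\Psi^\ell(z_0,r_k)\ge(1/4-\eta)\eps_0=c_0\eps_0$. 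Hence $k\in\mathcal B^\ell_{c_0\eps_0}$.
\end{itemize}
The only point to check is that \Cref{cor:coarse-visibility}, and behind it \Cref{thm:resolution}, applies at each $r_k$. This requires the three quantities to be finite on $Q_{r_k}(z_0)$ and the fields to be defined on the enlarged cylinder. Both hold because $Q_{r_k}(z_0)\subset Q_{r_0}(z_0)$, which we take to satisfy the standing assumptions. \Cref{thm:resolution} imposes no restriction on $\ell/r_k$, so no compatibility condition between $\ell$ and the chain is needed.

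\textbf{Step 2: the pointwise indicator bound.} The inclusion is equivalent to the inequality
\[
\one_{\{\Psi(r_k)\ge\eps_0\}}\le \one_{\{\Psi^\ell(r_k)\ge c_0\eps_0\}}+\one_{\{\Omega^\ell(r_k)>\eta\eps_0\}}
\]
for each $k$. This follows from $\one_{A}\le\one_{B\cup C}\le\one_B+\one_C$ whenever $A\subset B\cup C$.

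\textbf{Step 3: summation.} Multiplying the indicator bound by $w_k\ge0$ and summing over $0\le k\le N-1$ gives \eqref{eq:weighted-bad-set}. The choice $w_k\equiv1$ gives the count bound \eqref{eq:bad-set-count}; alternatively, it follows from subadditivity of cardinality applied to \eqref{eq:bad-set-inclusion}.

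There is no genuine obstacle here. The only care needed is the bookkeeping of hypotheses for \Cref{thm:resolution} at every scale of the chain, and the observation that nonnegativity of the weights is exactly what allows the indicator inequality to be summed.
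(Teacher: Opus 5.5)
Your proof is correct and follows the paper's own argument: the paper also splits each $k\in\mathcal B_{\eps_0}$ on whether $\Omega^\ell(r_k)>\eta\eps_0$, invokes \Cref{cor:coarse-visibility} in the complementary case, and declares the counting and weighted bounds immediate. Your explicit indicator inequality $\one_A\le\one_B+\one_C$, your use of $w_k\ge0$ to sum it, and your check that the hypotheses of \Cref{thm:resolution} hold at every $r_k$ just spell out what the paper leaves implicit.
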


This is the finite-chain bookkeeping form of the lemma.  It does not count CKN-bad scales by itself; it says that every such count splits into a coarse-visible count and a residual count.

\subsection{Conditional detector interface}

The following proposition records how the resolution lemma would enter a detector theorem once an independent coarse observability estimate is available.

Let $A_k^\infty(G^\ell)$ be any scale-$r_k$ detector seminorm built from the combined work distribution $G^\ell$, for instance
\begin{equation}\label{eq:infinite-detector}
A_k^\infty(G^\ell)=
\sup_{\psi\in\mathfrak X_k,\ \norm{\psi}{\mathfrak X_k}\le1}
 r_k^{-1}\left|\left\langle G^\ell,\chi_k\psi_k^\sharp\right\rangle\right|,
\end{equation}
where $\chi_k$ is a cutoff on $Q_{r_k}$ and $\psi_k^\sharp$ is the parabolically rescaled reference test profile.

\begin{proposition}[Resolution-to-detector template]\label{prop:detector-template}
Fix $k$, $\eps_0>0$, and $0<\eta<1/4$.  Suppose that on a class $\calK_k$ of coarse-grained Navier--Stokes packages one has the coarse observability implication
\begin{equation}\label{eq:coarse-observability-hyp}
\Psi^\ell(r_k)\ge\left(\frac14-\eta\right)\eps_0
\quad\Longrightarrow\quad
A_k^\infty(G^\ell)\ge c_{\mathrm{obs}}.
\end{equation}
Then for every element of $\calK_k$,
\begin{equation}\label{eq:detector-dichotomy-template}
\Psi(r_k)\ge\eps_0
\quad\Longrightarrow\quad
A_k^\infty(G^\ell)\ge c_{\mathrm{obs}}
\quad\text{or}\quad
\Omega^\ell(r_k)>\eta\eps_0.
\end{equation}
\end{proposition}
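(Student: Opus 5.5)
The statement to prove is \cref{prop:detector-template}. It follows by combining \cref{cor:coarse-visibility} with the observability hypothesis \eqref{eq:coarse-observability-hyp}, using a two-case split on the size of the residual. Fix an element of $\calK_k$ and assume $\Psi(r_k)\ge\eps_0$. Everything happens at the single scale $r_k$ and the single center $z_0=(0,0)$, so no chain bookkeeping is needed.

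\emph{Case 1: $\Omega^\ell(r_k)>\eta\eps_0$.} Then the second alternative in \eqref{eq:detector-dichotomy-template} holds directly, and there is nothing more to prove.

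\emph{Case 2: $\Omega^\ell(r_k)\le\eta\eps_0$.} The hypotheses of \cref{cor:coarse-visibility} are satisfied with $r=r_k$, since $\Psi(r_k)\ge\eps_0$ and $0<\eta<1/4$. That corollary yields $\Psi^\ell(r_k)\ge(1/4-\eta)\eps_0$. For completeness, this is just \eqref{eq:resolution-estimate} rearranged: $\eps_0\le 4\Psi^\ell(r_k)+4\eta\eps_0$. This is exactly the premise of \eqref{eq:coarse-observability-hyp}. Because the package lies in $\calK_k$, the implication applies and gives $A_k^\infty(G^\ell)\ge c_{\mathrm{obs}}$, which is the first alternative.

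The two cases are exhaustive, so the dichotomy holds. The only point requiring care is to confirm that the quantities entering \cref{thm:resolution} are finite on $Q_{r_k}$, so that the corollary is applicable. This is part of the standing assumption that the fields are defined, with $u\in L^3$ and $p\in L^{3/2}$, on an enlarged cylinder. If $\Omega^\ell(r_k)$ were infinite, Case 1 would apply anyway. I do not expect any genuine obstacle: all of the analytic content sits in the assumed observability hypothesis, and this proposition only does the logical routing.
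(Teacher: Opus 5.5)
Your proof is correct and is the paper's argument. In the residual-small case you use \cref{cor:coarse-visibility} to get $\Psi^\ell(r_k)\ge(1/4-\eta)\eps_0$ and then apply the observability hypothesis; the residual-large case is immediate.
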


\begin{proof}
If $\Psi(r_k)\ge\eps_0$ and $\Omega^\ell(r_k)\le\eta\eps_0$, Corollary~\ref{cor:coarse-visibility} gives $\Psi^\ell(r_k)\ge(1/4-\eta)\eps_0$.  The assumed coarse observability estimate \eqref{eq:coarse-observability-hyp} then gives $A_k^\infty(G^\ell)\ge c_{\mathrm{obs}}$.
\end{proof}

\begin{remark}[Where the real difficulty remains]
The hypothesis \eqref{eq:coarse-observability-hyp} is not a consequence of the resolution lemma.  It is a separate compactness and separation problem.  It can fail through pressure--flux cancellation, harmonic pressure tails, coherent low-frequency resolved flow, or other silent profiles.  The resolution lemma only removes the subfilter ambiguity.
\end{remark}
\subsection{Proof of the Resolution Lemma}

We prove Theorem~\ref{thm:resolution} at the origin.  The proof at a general center $z_0$ is identical.

Write
\[
u=U^\ell+(u-U^\ell).
\]
The elementary inequality
\begin{equation}\label{eq:p-power-ineq-3}
|a+b|^3\le 4(|a|^3+|b|^3)
\end{equation}
gives
\begin{equation}\label{eq:velocity-split-proof}
\int_{Q_r}|u|^3\dxdt
\le
4\int_{Q_r}|U^\ell|^3\dxdt
+
4\int_{Q_r}|u-U^\ell|^3\dxdt.
\end{equation}

For the pressure, use the projection $\Pf_r$ from \eqref{eq:pressure-projection}.  Since
\[
p=P^\ell+(p-P^\ell),
\]
linearity gives
\begin{equation}\label{eq:pressure-exact-split}
\Pf_r p=\Pf_r P^\ell+\Pf_r(p-P^\ell).
\end{equation}
Using
\begin{equation}\label{eq:p-power-ineq-32}
|a+b|^{3/2}\le 2^{1/2}\left(|a|^{3/2}+|b|^{3/2}\right),
\end{equation}
we obtain
\begin{align}\label{eq:pressure-split-proof}
\int_{Q_r}|\Pf_r p|^{3/2}\dxdt
&\le
2^{1/2}\int_{Q_r}|\Pf_r P^\ell|^{3/2}\dxdt
\nonumber\\
&\quad+
2^{1/2}\int_{Q_r}|\Pf_r(p-P^\ell)|^{3/2}\dxdt.
\end{align}
Adding \eqref{eq:velocity-split-proof} and \eqref{eq:pressure-split-proof}, and dividing by $r^2$, proves the sharper estimate \eqref{eq:sharp-resolution-estimate}.  Since $2^{1/2}\le4$, the simpler bound \eqref{eq:resolution-estimate} follows.

\begin{proof}[Proof of Corollary~\ref{cor:coarse-visibility}]
By Theorem~\ref{thm:resolution},
\[
\eps_0\le\Psi(r)\le4\Psi^\ell(r)+4\Omega^\ell(r)
\le4\Psi^\ell(r)+4\eta\eps_0.
\]
Hence
\[
4\Psi^\ell(r)\ge(1-4\eta)\eps_0,
\]
which is \eqref{eq:coarse-lower-bound}.
\end{proof}

\begin{proof}[Proof of Corollary~\ref{cor:finite-chain}]
For each $k\in\mathcal B_{\eps_0}$, either $\Omega^\ell(r_k)>\eta\eps_0$, in which case $k\in\mathcal R^\ell_{\eta\eps_0}$, or $\Omega^\ell(r_k)\le\eta\eps_0$, in which case Corollary~\ref{cor:coarse-visibility} gives $k\in\mathcal B^\ell_{c_0\eps_0}$.  This proves \eqref{eq:bad-set-inclusion}.  The counting and weighted estimates follow immediately.
\end{proof}
\subsection{Quantifying the Subfilter Residual}

The resolution lemma is useful only to the extent that $\Omega^\ell$ can be controlled or recognized as an obstruction.  This section gives elementary residual bounds in terms of local translation moduli.

\subsubsection{Spatial increment moduli}

Let $I_r(t_0)=(t_0-r^2,t_0)$ and let
\[
Q_r^\ell(z_0)=B_{r+\ell}(x_0)\times I_r(t_0).
\]
For $f\in L^q(Q_r^\ell(z_0))$, define the local spatial increment modulus
\begin{equation}\label{eq:increment-modulus}
\mathfrak m_q(f;z_0,r,\ell)=
r^{-2}\sup_{|h|\le\ell}\int_{I_r(t_0)}\int_{B_r(x_0)}
|f(x+h,t)-f(x,t)|^q\dxdt.
\end{equation}
Here $f$ is interpreted on the enlarged spatial cylinder, so the shifted point $x+h$ belongs to $B_{r+\ell}(x_0)$.

\begin{lemma}[Mollification error by increments]\label{lem:mollifier-increments}
Let $1\le q<\infty$.  If $S_\ell$ is the spatial mollifier defined above, then
\begin{equation}\label{eq:mollifier-error-increments}
r^{-2}\int_{Q_r(z_0)}|f-S_\ell f|^q\dxdt
\le
\mathfrak m_q(f;z_0,r,\ell).
\end{equation}
Moreover,
\begin{equation}\label{eq:projected-mollifier-error-increments}
r^{-2}\int_{Q_r(z_0)}|\Pf_{r,z_0}(f-S_\ell f)|^q\dxdt
\le
C_q\mathfrak m_q(f;z_0,r,\ell).
\end{equation}
\end{lemma}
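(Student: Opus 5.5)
The first estimate will follow from writing the mollification error as an average of spatial increments and applying Jensen's inequality; the second will follow from an $L^q$-boundedness bound for the projection $\Pf_{r,z_0}$.

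\emph{Step 1: the unprojected estimate.} Since $\int\rho_\ell=1$ and $\supp\rho_\ell\subset B_\ell$, for $(x,t)\in Q_r(z_0)$ we have
\[
f(x,t)-S_\ell f(x,t)=\int_{B_\ell}\rho_\ell(h)\bigl(f(x,t)-f(x-h,t)\bigr)\dd h .
\]
The measure $\rho_\ell(h)\dd h$ is a probability measure and $s\mapsto|s|^q$ is convex for $q\ge1$, so Jensen's inequality gives
\[
|f-S_\ell f|^q(x,t)\le\int\rho_\ell(h)\,|f(x-h,t)-f(x,t)|^q\dd h .
\]
Next integrate over $Q_r(z_0)$ and use Fubini to move the $h$-integral outside. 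For each fixed $h$ with $|h|\le\ell$ (replace $h$ by $-h$; since $\rho$ is radial, $\rho_\ell$ is even), the inner integral is bounded by the supremum appearing in \eqref{eq:increment-modulus}. All shifted points lie in $B_{r+\ell}(x_0)$, which is where $f$ is defined. Integrating against the probability density leaves this supremum unchanged, and multiplying by $r^{-2}$ gives \eqref{eq:mollifier-error-increments} with constant $1$. Measurability of the supremum is not an issue, because we only need the bound for each fixed $h$ before integrating.

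\emph{Step 2: the projected estimate.} For each fixed $t$, the map $g\mapsto g-(g)_{B_r(x_0)}$ is bounded on $L^q(B_r(x_0))$ with norm at most $2$. Indeed, Jensen's (or Hölder's) inequality gives $|(g)_{B_r}|^q\le (|g|^q)_{B_r}$, so $\int_{B_r}|(g)_{B_r}|^q\le\int_{B_r}|g|^q$. Combining this with $|a+b|^q\le 2^{q-1}(|a|^q+|b|^q)$ yields
\[
\int_{B_r}|g-(g)_{B_r}|^q\le 2^q\int_{B_r}|g|^q .
\]
Apply this with $g=(f-S_\ell f)(\cdot,t)$ for a.e.\ $t$, integrate over $I_r(t_0)$, and invoke Step 1. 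This gives \eqref{eq:projected-mollifier-error-increments} with $C_q=2^q$. We use here that the projection commutes with the time integration, because the mean is taken slice by slice in time.

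\emph{Main obstacle.} The only point needing care is the domain issue: $S_\ell f$ on $Q_r(z_0)$ requires $f$ on $B_{r+\ell}(x_0)\times I_r(t_0)$, and this is exactly the set over which \eqref{eq:increment-modulus} is taken. It is also precisely the standing assumption on the enlarged cylinder, so no cutoff is needed. The constants are not sharp, and the Jensen step is the key step.
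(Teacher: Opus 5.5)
Your proposal is correct and follows the paper's proof essentially step for step. The paper uses the same Jensen bound against the probability density $\rho_\ell$, integrates over $Q_r(z_0)$ and takes the supremum over shifts, and then applies the slice-by-slice bound $\norm{g-(g)_{B_r}}{L^q(B_r)}\le 2\norm{g}{L^q(B_r)}$ to get $C_q=2^q$. Your appeal to the evenness of $\rho_\ell$ is harmless but unnecessary, since the supremum in \eqref{eq:increment-modulus} is already taken over the symmetric set $|h|\le\ell$.
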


\begin{proof}
For $(x,t)\in Q_r(z_0)$,
\[
f(x,t)-S_\ell f(x,t)=\int_{B_\ell}\rho_\ell(y)\bigl(f(x,t)-f(x-y,t)\bigr)\dy.
\]
By Jensen's inequality and $\int\rho_\ell=1$,
\[
|f(x,t)-S_\ell f(x,t)|^q
\le
\int_{B_\ell}\rho_\ell(y)|f(x,t)-f(x-y,t)|^q\dy.
\]
Integrating over $Q_r(z_0)$ and taking the supremum over $|y|\le\ell$ proves \eqref{eq:mollifier-error-increments}.  For the projected estimate, use the elementary bound
\[
\norm{g-(g)_{B_r}(t)}{L^q(B_r)}\le 2\norm{g}{L^q(B_r)}
\]
for a.e. $t$, then integrate in time.
\end{proof}

\begin{proposition}[Residual bound by local increments]\label{prop:residual-increments}
For every $Q_r(z_0)$ on which the following quantities are finite,
\begin{equation}\label{eq:Omega-increment-bound}
\Omega^\ell(z_0,r)
\le
\mathfrak m_3(u;z_0,r,\ell)+C\mathfrak m_{3/2}(p;z_0,r,\ell).
\end{equation}
The constant $C$ is universal.
\end{proposition}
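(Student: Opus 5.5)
The plan is to apply Lemma~\ref{lem:mollifier-increments} twice, once to each component of the residual, and add the results. By definition \eqref{eq:Omega-residual}, $\Omega^\ell(z_0,r)$ is the sum of a velocity term $r^{-2}\int_{Q_r(z_0)}|u-U^\ell|^3\dxdt$ and a pressure term $r^{-2}\int_{Q_r(z_0)}|\Pf_{r,z_0}(p-P^\ell)|^{3/2}\dxdt$. Since $U^\ell=S_\ell u$ and $P^\ell=S_\ell p$ by \eqref{eq:coarse-fields}, both terms are exactly of the form controlled by that lemma. The finiteness hypothesis guarantees that $u\in L^3$ and $p\in L^{3/2}$ on the enlarged cylinder $Q^\ell_r(z_0)$, so the increment moduli make sense.

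For the velocity, I will apply \eqref{eq:mollifier-error-increments} componentwise with $q=3$, or more cleanly to the vector-valued $f=u$. The Jensen step in the proof of Lemma~\ref{lem:mollifier-increments} uses only convexity of $|\cdot|^q$ on $\R^3$, so it holds verbatim for vector fields. This gives
\[
r^{-2}\int_{Q_r(z_0)}|u-U^\ell|^3\dxdt\le \mathfrak m_3(u;z_0,r,\ell),
\]
with constant exactly $1$, where $\mathfrak m_3$ is interpreted with the Euclidean norm of the vector increment.

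For the pressure, I will apply the projected estimate \eqref{eq:projected-mollifier-error-increments} with $q=3/2$ and $f=p$:
\[
r^{-2}\int_{Q_r(z_0)}|\Pf_{r,z_0}(p-P^\ell)|^{3/2}\dxdt\le C_{3/2}\,\mathfrak m_{3/2}(p;z_0,r,\ell).
\]
Here $C_{3/2}=2^{3/2}$ comes from the bound $\norm{g-(g)_{B_r}}{L^q}\le2\norm{g}{L^q}$ raised to the power $q$. Adding the two inequalities gives \eqref{eq:Omega-increment-bound} with the universal constant $C=2^{3/2}$.

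The only point needing care, and the main potential obstacle, is the interpretation of the projected bound. The estimate $\norm{g-(g)_{B_r}}{L^q(B_r)}\le 2\norm{g}{L^q(B_r)}$ follows from Hölder's inequality applied to the mean, $|(g)_{B_r}|\,|B_r|^{1/q}\le\norm{g}{L^q(B_r)}$. Applied slice-by-slice in time with $g=(p-P^\ell)(\cdot,t)$, then raised to the power $q$ and integrated over $I_r(t_0)$, it reduces the projected estimate to the unprojected one. I will also check that the shifts $x-y$ with $|y|\le\ell$ stay in $B_{r+\ell}(x_0)$. This makes the supremum over $|h|\le\ell$ in \eqref{eq:increment-modulus} legitimate, using $h=-y$ together with the symmetry of the ball $B_\ell$.
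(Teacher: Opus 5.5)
Your proposal is correct and follows the paper's own argument: apply Lemma~\ref{lem:mollifier-increments} with $q=3$ (unprojected, to the vector field $u$) and with $q=3/2$ (projected, to $p$), then add; your explicit constant $C=C_{3/2}=2^{3/2}$ is the one the lemma's proof gives. One small correction: finiteness of $\Omega^\ell$ and of the increment moduli does not by itself give $u\in L^3$ or $p\in L^{3/2}$ on $Q_r^\ell(z_0)$, because both quantities are blind to spatially constant parts (for example $p\mapsto p+a(t)$), but nothing is lost, since the Jensen--Tonelli step only needs enough local integrability for $S_\ell$ to be defined, and that is part of the paper's standing assumptions.
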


\begin{proof}
Apply Lemma~\ref{lem:mollifier-increments} with $q=3$ to $u$ and with $q=3/2$ to $p$, using the definition \eqref{eq:Omega-residual}.
\end{proof}

\subsubsection{A scale-invariant Besov corollary}

The preceding estimate can be converted into explicit powers of $\ell/r$ if spatial increments are controlled.

\begin{corollary}[Besov-type residual smallness]\label{cor:besov-residual}
Suppose that for some $\alpha,\beta\in(0,1]$ and constants $M_u,M_p$ one has
\begin{align}
\sup_{|h|\le\ell}\int_{Q_r^\ell(z_0)}|u(x+h,t)-u(x,t)|^3\dxdt
&\le M_u^3\left(\frac{\ell}{r}\right)^{3\alpha}r^2,
\label{eq:u-besov-assumption}\\
\sup_{|h|\le\ell}\int_{Q_r^\ell(z_0)}|p(x+h,t)-p(x,t)|^{3/2}\dxdt
&\le M_p^{3/2}\left(\frac{\ell}{r}\right)^{\frac32\beta}r^2.
\label{eq:p-besov-assumption}
\end{align}
Then
\begin{equation}\label{eq:besov-Omega-bound}
\Omega^\ell(z_0,r)
\le
C\left[M_u^3\left(\frac{\ell}{r}\right)^{3\alpha}
+M_p^{3/2}\left(\frac{\ell}{r}\right)^{\frac32\beta}\right].
\end{equation}
In particular, if $\ell/r$ is sufficiently small in terms of $M_u,M_p,\alpha,\beta$ and $\eps_0$, then the residual alternative in Corollary~\ref{cor:coarse-visibility} is excluded.
\end{corollary}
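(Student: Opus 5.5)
The plan is to derive \eqref{eq:besov-Omega-bound} directly from Proposition~\ref{prop:residual-increments}, which already gives
\[
\Omega^\ell(z_0,r)\le \mathfrak m_3(u;z_0,r,\ell)+C\,\mathfrak m_{3/2}(p;z_0,r,\ell).
\]
Everything then reduces to bounding the two increment moduli \eqref{eq:increment-modulus} by the hypotheses \eqref{eq:u-besov-assumption}--\eqref{eq:p-besov-assumption}.

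\textbf{Comparing the moduli with the hypotheses.} For each fixed $|h|\le\ell$, the integral in $\mathfrak m_q$ runs over $I_r(t_0)\times B_r(x_0)$. This set is contained in $Q_r^\ell(z_0)=B_{r+\ell}(x_0)\times I_r(t_0)$, and the integrand is nonnegative, so
\[
\int_{I_r}\int_{B_r}|u(x+h,t)-u(x,t)|^3\dxdt\le\int_{Q_r^\ell}|u(x+h,t)-u(x,t)|^3\dxdt\le M_u^3(\ell/r)^{3\alpha}r^2.
\]
Taking the supremum over $|h|\le\ell$ and multiplying by $r^{-2}$ gives $\mathfrak m_3(u)\le M_u^3(\ell/r)^{3\alpha}$. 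The same argument with $q=3/2$ gives $\mathfrak m_{3/2}(p)\le M_p^{3/2}(\ell/r)^{3\beta/2}$.

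One bookkeeping point needs care. In \eqref{eq:u-besov-assumption} the shifted point $x+h$ may lie in $B_{r+2\ell}(x_0)$, so the fields must be defined on a slightly larger spatial cylinder. This is already covered by the paper's standing convention that fields live on a sufficiently enlarged cylinder; I would simply note it. Inserting the two bounds into Proposition~\ref{prop:residual-increments} yields \eqref{eq:besov-Omega-bound} with constant $\max(1,C)$.

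\textbf{The ``in particular'' clause.} Since $\alpha,\beta>0$, the right side of \eqref{eq:besov-Omega-bound} tends to $0$ as $\ell/r\to0$, for fixed $M_u,M_p$. The condition needed is that the bound be at most $\eta\eps_0$. This holds as soon as
\[
(\ell/r)^{3\alpha}\le \frac{\eta\eps_0}{2CM_u^3}
\quad\text{and}\quad
(\ell/r)^{3\beta/2}\le \frac{\eta\eps_0}{2CM_p^{3/2}}.
\]
Under these conditions $\Omega^\ell\le\eta\eps_0$, so Corollary~\ref{cor:coarse-visibility} forces $\Psi^\ell\ge(1/4-\eta)\eps_0$, and the residual alternative is excluded.

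\textbf{Main obstacle.} There is no genuine analytic obstacle here. The only delicate points are the domain inclusion and the enlarged-domain convention for shifts, together with tracking the universal constant $C$ inherited from the projected estimate \eqref{eq:projected-mollifier-error-increments}. That constant is absorbed into the $C$ in \eqref{eq:besov-Omega-bound}.
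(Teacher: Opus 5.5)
Your proposal is correct and takes the paper's intended route. The paper gives no separate proof and presents the corollary as a direct consequence of Proposition~\ref{prop:residual-increments}. Your comparison of $\mathfrak m_3(u)$ and $\mathfrak m_{3/2}(p)$ with the hypotheses, using $B_r\times I_r(t_0)\subset Q_r^\ell(z_0)$, and your explicit choice of $\ell/r$ to reach the threshold $\eta\eps_0$ supply exactly the omitted steps.
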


\begin{remark}[No regularity is hidden in the main theorem]
Corollary~\ref{cor:besov-residual} is an optional refinement.  The resolution estimate itself does not assume these increment bounds.  When no such information is available, a large value of $\Omega^\ell$ is a genuine obstruction rather than a proof defect.
\end{remark}
\subsection{Pressure Residuals and Harmonic Tails}\label{sec:pressure-residuals}

The pressure part of $\Omega^\ell$ is often the most delicate term.  This section records a standard way to separate local Calderon--Zygmund pressure from harmonic pressure.  The goal is not to prove a new pressure theorem, but to identify the pieces that the residual channel contains.

\subsubsection{Local pressure split}

Let $B_r(x_0)\Subset B_R(x_0)$ and let $\chi\in C_c^\infty(B_R(x_0))$ satisfy $\chi\equiv1$ on a neighborhood of $B_r(x_0)$.  For a.e. $t$, define the localized pressure part
\begin{equation}\label{eq:localized-pressure-part}
p_{\loc}(\cdot,t)=\mathcal R_i\mathcal R_j\bigl(\chi u_i u_j\bigr)(\cdot,t),
\end{equation}
where $\mathcal R_i$ are the Riesz transforms on $\R^3$, and summation over $i,j$ is understood.  Then
\[
p_h=p-p_{\loc}
\]
is harmonic in $B_r(x_0)$ for a.e. $t$, after the usual pressure normalization.

Thus
\begin{equation}\label{eq:pressure-residual-split}
p-P^\ell
=(p_{\loc}-S_\ell p_{\loc})+(p_h-S_\ell p_h),
\end{equation}
where the identity is understood on cylinders where the convolution is defined.  Applying the pressure projection $\Pf_{r,z_0}$ and the triangle inequality gives
\begin{align}\label{eq:pressure-residual-local-harmonic}
r^{-2}\int_{Q_r(z_0)}|\Pf_r(p-P^\ell)|^{3/2}\dxdt
&\le
C r^{-2}\int_{Q_r(z_0)}|\Pf_r(p_{\loc}-S_\ell p_{\loc})|^{3/2}\dxdt
\nonumber\\
&\quad+
C r^{-2}\int_{Q_r(z_0)}|\Pf_r(p_h-S_\ell p_h)|^{3/2}\dxdt.
\end{align}
The first term is a localized singular-integral residual; the second is a harmonic tail residual.

\subsubsection{Harmonic smoothing bound}

The harmonic term can be estimated by interior regularity.  The following simple form is sufficient for interpretation.

\begin{lemma}[Harmonic residual estimate]\label{lem:harmonic-residual}
Let $h(\cdot,t)$ be harmonic in $B_R(x_0)$ for a.e. $t\in I_r(t_0)$, and assume $0<\ell\le (R-r)/4$.  Then
\begin{equation}\label{eq:harmonic-residual-bound}
r^{-2}\int_{Q_r(z_0)}|\Pf_r(h-S_\ell h)|^{3/2}\dxdt
\le
C\left(\frac{\ell}{R-r}\right)^{3/2}
 r^{-2}\int_{I_r(t_0)}\int_{B_R(x_0)}|h-(h)_{B_R}(t)|^{3/2}\dxdt.
\end{equation}
The constant is universal up to the fixed mollifier.
\end{lemma}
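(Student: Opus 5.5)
We work at fixed $t$ and integrate in time at the end. Two reductions come first. The pressure projection commutes with adding constants, so we may replace $h$ by $g=h-(h)_{B_R}(t)$. This $g$ is harmonic, and $S_\ell$ maps constants to themselves, so $h-S_\ell h=g-S_\ell g$. Next, by the elementary bound $\|\Pf_r f\|_{L^{3/2}(B_r)}\le 2\|f\|_{L^{3/2}(B_r)}$ from the proof of Lemma~\ref{lem:mollifier-increments}, it suffices to bound $\int_{B_r}|g-S_\ell g|^{3/2}\dx$ by $C(\ell/(R-r))^{3/2}\int_{B_R}|g|^{3/2}\dx$.

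The key pointwise step uses the mean value property. Set $d=R-r$ and $\ell\le d/4$. For $x\in B_r$ and $|y|\le\ell$, write
\[
g(x)-g(x-y)=\int_0^1 y\cdot\nabla g(x-sy)\,\mathrm ds ,
\]
which gives
\[
|g(x)-S_\ell g(x)|\le \ell\sup_{B_{r+\ell}}|\nabla g|.
\]
Take any $z\in B_{r+\ell}$. Then $B_{d/2}(z)\subset B_{r+\ell+d/2}\subset B_R$, because $\ell+d/2\le 3d/4<d$. Interior gradient estimates for harmonic functions (the mean value property applied to $\partial_i g$, followed by the divergence theorem) give
\[
|\nabla g(z)|\le C d^{-1}\sup_{B_{d/4}(z)}|g| \le C d^{-1}\, d^{-3}\int_{B_{d/2}(z)}|g|\dx .
\]
Hölder then gives $|\nabla g(z)|\le C d^{-1}d^{-2}\|g\|_{L^{3/2}(B_R)}$. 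Hence for every $x\in B_r$,
\[
|g(x)-S_\ell g(x)|\le C\frac{\ell}{d}\, d^{-2}\|g\|_{L^{3/2}(B_R)}.
\]

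The main obstacle is the volume factor. Integrating the pointwise bound over $B_r$ produces
\[
\int_{B_r}|g-S_\ell g|^{3/2}\dx \le C\Big(\frac{\ell}{d}\Big)^{3/2}\Big(\frac{r}{d}\Big)^{3}\int_{B_R}|g|^{3/2}\dx .
\]
This matches \eqref{eq:harmonic-residual-bound} only if $r\lesssim d$, that is, $R\ge cr$ with $R-r$ comparable to $R$, which is the usual geometry (for example $R=2r$). In general we cannot avoid it by a sup bound, so we would try to replace the sup estimate with an $L^{3/2}$ estimate. The first idea is to apply Lemma~\ref{lem:mollifier-increments} with $q=3/2$, which bounds the error by translation increments $\sup_{|y|\le\ell}\|g(\cdot-y)-g\|_{L^{3/2}(B_r)}\le \ell\,\|\nabla g\|_{L^{3/2}(B_{r+\ell})}$ (Minkowski's inequality for the integral in $s$). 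The remaining task is then the Caccioppoli-type bound
\[
\|\nabla g\|_{L^{3/2}(B_{r+\ell})}\le C d^{-1}\|g\|_{L^{3/2}(B_R)}.
\]
We obtain this by averaging the pointwise gradient estimate over balls $B_{d/2}(z)$: $|\nabla g(z)|\le C d^{-1}\fint_{B_{d/2}(z)}|g|$. The maximal-function–type averaging operator is bounded on $L^{3/2}$ uniformly, and Jensen's inequality handles the average. This removes the volume factor and yields exactly $C(\ell/d)^{3/2}$.

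Multiplying by $r^{-2}$ and integrating over $I_r(t_0)$ then gives \eqref{eq:harmonic-residual-bound}. The constant depends only on the mollifier through $\int\rho=1$ and $\supp\rho\subset B_1$.
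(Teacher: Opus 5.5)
Your proof is correct, and it departs from the paper's proof exactly where you located the main obstacle.

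The paper stays on the pointwise route. For $x\in B_r(x_0)$ and $|y|\le\ell$, it writes, from the interior gradient estimate on $B_{(R+r)/2}(x_0)$,
\[
|h(x,t)-h(x-y,t)|\le C\frac{|y|}{R-r}\Bigl(\frac{1}{|B_R|}\int_{B_R(x_0)}|h-(h)_{B_R}(t)|^{3/2}\Bigr)^{2/3}.
\]
It then applies Jensen, integrates over $B_r\times I_r$, and uses the projection bound.

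Normalizing by $|B_R|$ in that display is justified only when $R-r$ is comparable to $R$. At points at distance about $(R-r)/2$ from $\partial B_R$, the interior estimate sees only a ball of radius about $R-r$, which costs a factor $(R/(R-r))^2$. The display can in fact fail with a universal constant when $R-r\ll R$: take $h=|x-\xi|^{-1}$ with $\xi$ at distance $R-r$ outside $B_R$. Carried out honestly, the paper's route produces precisely your extra factor $(r/(R-r))^3$. It therefore proves the lemma only in the usual geometry, such as $R=2r$.

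Your replacement has two ingredients:
\begin{itemize}
\item translation increments in $L^{3/2}(B_r)$ bounded by $\ell\|\nabla g\|_{L^{3/2}(B_{r+\ell})}$ via Minkowski;
\item the Caccioppoli-type bound $\|\nabla g\|_{L^{3/2}(B_{r+\ell})}\le C(R-r)^{-1}\|g\|_{L^{3/2}(B_R)}$, obtained by integrating the pointwise gradient estimate in $z$.
\end{itemize}
This costs one extra step but gives the stated universal constant for every $r<R$. The fixed-radius average is an $L^{3/2}$ contraction by Jensen and Fubini alone, so no maximal-function bound is needed. Your argument also uses only $\rho\ge0$, $\int\rho=1$ and $\supp\rho\subset B_1$.

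One further remark: in this paper $\rho$ is radial. The mean value property on spheres then gives $S_\ell h(x,t)=h(x,t)$ for every $x\in B_r(x_0)$, because $\overline{B_\ell(x)}\subset B_R(x_0)$ when $\ell\le(R-r)/4$. So the left side of the lemma vanishes identically. Either argument is only needed for a non-radial filter, and there yours is the one that covers all $r<R$.
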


\begin{proof}
For $(x,t)\in Q_r(z_0)$,
\[
h(x,t)-S_\ell h(x,t)=\int\rho_\ell(y)(h(x,t)-h(x-y,t))\dy.
\]
Since $h$ is harmonic in $B_R(x_0)$ and $x,x-y\in B_{(R+r)/2}(x_0)$, the interior gradient estimate gives
\[
|h(x,t)-h(x-y,t)|
\le
C\frac{|y|}{R-r}\left(\frac{1}{|B_R|}\int_{B_R(x_0)}|h(\xi,t)-(h)_{B_R}(t)|^{3/2}\,d\xi\right)^{2/3}.
\]
Jensen's inequality and $|y|\le\ell$ on the support of $\rho_\ell$ yield
\[
|h-S_\ell h|^{3/2}
\le
C\left(\frac{\ell}{R-r}\right)^{3/2}
\frac{1}{|B_R|}\int_{B_R(x_0)}|h-(h)_{B_R}(t)|^{3/2}\,d\xi.
\]
Integrating over $B_r\times I_r$ and applying the projection bound as in Lemma~\ref{lem:mollifier-increments} proves the estimate after adjusting constants.
\end{proof}

\begin{remark}[Harmonic pressure as a silent channel]
The harmonic residual is not a removable gauge artifact.  A harmonic pressure component can contribute to local pressure work and may remain significant across several scales.  In a detector-to-CKN argument, this term should be recorded explicitly, not hidden inside an unspecified error.
\end{remark}

\subsubsection{Local part and velocity increments}

For the localized pressure $p_{\loc}$ in \eqref{eq:localized-pressure-part}, Calderon--Zygmund boundedness gives
\begin{equation}\label{eq:CZ-pressure-bound}
\norm{p_{\loc}(\cdot,t)}{L^{3/2}(\R^3)}
\le C\norm{u(\cdot,t)}{L^3(B_R)}^2.
\end{equation}
A quantitative increment estimate for $p_{\loc}-S_\ell p_{\loc}$ can be obtained either directly from increments of $p_{\loc}$ or indirectly from commutator estimates for $u\otimes u$.  A basic version is
\begin{equation}\label{eq:ploc-increment-bound}
r^{-2}\int_{Q_r}|\Pf_r(p_{\loc}-S_\ell p_{\loc})|^{3/2}\dxdt
\le C\mathfrak m_{3/2}(p_{\loc};z_0,r,\ell).
\end{equation}
Under additional Besov-type control on $u$, this can be bounded in terms of velocity increments.  Such estimates are standard in coarse-graining arguments, but their exact form depends on the chosen function spaces and cutoffs.  The important point for the present paper is structural: the pressure residual splits into a local singular-integral residual and a harmonic residual.
\subsection{Examples and Necessity of the Residual Alternative}

The following examples are not asserted to be Navier--Stokes solutions.  They show that the form of the resolution lemma is forced at the level of the scale-critical functionals.

\begin{example}[Unresolved oscillation]
Let $u_\delta(x)=a\sin(x_1/\delta)e_1$ on a fixed ball, with $\delta\ll\ell\ll r$, and take $p=0$.  The full velocity contribution to $\Psi(r)$ is of size $|a|^3$, while $S_\ell u_\delta$ is small because the oscillation averages out.  Thus $\Psi^\ell(r)$ can be small although $\Psi(r)$ is not.  The missing contribution is exactly $r^{-2}\int|u-S_\ell u|^3$, hence belongs to $\Omega^\ell(r)$.
\end{example}

\begin{example}[Pressure oscillation below the coarse scale]
Let $u=0$ and $p_\delta(x)=b\sin(x_1/\delta)$, with $\delta\ll\ell\ll r$.  Then the full pressure term $r^{-2}\int|\Pf_r p_\delta|^{3/2}$ is nontrivial, but the coarse pressure $P^\ell=S_\ell p_\delta$ is small.  Again the loss is not a failure of the proof; it is the pressure component of $\Omega^\ell$.
\end{example}

\begin{example}[Large smooth resolved fields]
A smooth low-frequency field may have large $\Psi^\ell(r)$ on a moderately large cylinder even though it has no singular behavior.  The resolution lemma deliberately does not distinguish singular concentration from large but smooth resolved size.  That distinction belongs to a later observability or decay argument, not to the resolution step.
\end{example}
\section{Pressure--Flux Work Depletion}\label{sec:work-depletion}

\subsection{Setup and main theorem}\label{sec:setup}

\subsubsection{Local solution and common spatial coarse graining}

Let
\[
\mathcal Q=B_{2r_0}(x_0)\times(T_0,T_1)
\]
with \(r_0>0\), and let \((u,p)\) be a suitable weak solution of \eqref{eq:NS} in \(\mathcal Q\). Thus, locally,
\begin{equation}\label{eq:energy-class}
u\in L^\infty_tL^2_x\cap L^2_tH^1_x,
\qquad
p\in L^{3/2}.
\end{equation}
We use the standard weakly continuous representative of \(u\). On bounded subcylinders, interpolation gives \(u\in L^3\).

Choose a nonnegative radial function \(\rho\in C_c^\infty(B_1)\) with \(\int\rho=1\), and write
\[
\rho_\ell(x)=\ell^{-3}\rho(x/\ell),
\qquad
S_\ell f=\rho_\ell*_x f.
\]
Fix one physical length \(\ell>0\), small enough that every spatial convolution used below remains inside \(B_{2r_0}(x_0)\). Define
\begin{equation}\label{eq:coarse-package}
U:=S_\ell u,
\qquad
P:=S_\ell p,
\qquad
R:=S_\ell(u\otimes u)-U\otimes U,
\qquad
\Pi:=-R:\nabla U.
\end{equation}
The superscript \(\ell\) will occasionally be restored when the dependence on the filter length matters.

\subsubsection{A finite chain of adjacent slabs}

Fix \(N\in\mathbb N\), \(0<\theta<1\), and set
\begin{equation}\label{eq:radii}
r_k=\theta^k r_0,
\qquad
k=0,\ldots,N.
\end{equation}
Choose adjacent times
\begin{equation}\label{eq:times}
\tau_{k+1}-\tau_k=r_k^2,
\qquad
k=0,\ldots,N-1,
\end{equation}
so that every closed slab
\[
\overline{Q_k}
:=
\overline{B_{r_k}(x_0)}\times[\tau_k,\tau_{k+1}]
\]
is compactly contained in the region on which the coarse package is defined.

Let \(\chi\in C_c^\infty(B_1)\) be radial, nonincreasing in \(|x|\), and satisfy
\begin{equation}\label{eq:base-cutoff}
0\le \chi\le1,
\qquad
\chi\equiv1\text{ on }B_\vartheta
\end{equation}
for some \(0<\vartheta<1\). Set
\begin{equation}\label{eq:chi-k}
\chi_k(x)=\chi\left(\frac{x-x_0}{r_k}\right).
\end{equation}
Then
\begin{equation}\label{eq:nesting}
\chi_{k+1}\le\chi_k
\qquad\text{pointwise.}
\end{equation}

\subsubsection{Local work, dissipation, and leakage}

Let \(I=(t_-,t_+)\), let \(r>0\), and let
\[
\phi\in C^\infty([t_-,t_+];C_c^\infty(B_r(x_0)))
\]
be nonnegative. Define the localized kinetic energy
\begin{equation}\label{eq:Kphi}
K_\phi(t)=\int_{\R^3}\frac12|U(x,t)|^2\phi(x,t)\dx.
\end{equation}
Define the normalized combined work, resolved dissipation, and localization functional by
\begin{align}
\W_{I,r}[\phi]
&:=
r^{-1}\int_I\int
\left(
\phi\Pi-PU\cdot\nabla\phi
\right)\dxdt,
\label{eq:W-def}
\\
\D_{I,r}[\phi]
&:=
r^{-1}\int_I\int
\phi|\nabla U|^2\dxdt,
\label{eq:D-def}
\\
\Lk_{I,r}[\phi]
&:=
r^{-1}\int_I\int
\left[
\frac12|U|^2(\partial_t\phi+\Delta\phi)
+
\frac12|U|^2U\cdot\nabla\phi
+
(RU)\cdot\nabla\phi
\right]\dxdt.
\label{eq:L-def}
\end{align}
The distribution
\begin{equation}\label{eq:G-def}
G:=\Pi+\nabla\cdot(PU)
\end{equation}
is understood through
\begin{equation}\label{eq:G-pairing}
\pair{G}{\phi}
=
\int\phi\Pi\dxdt-
\int PU\cdot\nabla\phi\dxdt.
\end{equation}
Thus \(\W_{I,r}[\phi]=r^{-1}\pair{G}{\phi}\).

For the slab \(Q_k\), we abbreviate
\[
\W_k[\phi]=\W_{I_k,r_k}[\phi],
\qquad
\D_k[\phi]=\D_{I_k,r_k}[\phi],
\qquad
\Lk_k[\phi]=\Lk_{I_k,r_k}[\phi].
\]
The endpoint quantities use only the base spatial cutoff:
\begin{equation}\label{eq:E-endpoints}
\Ek_k^-:=r_k^{-1}K_{\chi_k}(\tau_k),
\qquad
\Ek_k^+:=r_k^{-1}K_{\chi_k}(\tau_{k+1}).
\end{equation}

\subsubsection{Finite-dimensional active work coefficients}

For each \(k\), fix an integer \(m_k\ge1\) and profiles
\begin{equation}\label{eq:profiles}
\psi_{k,1},\ldots,\psi_{k,m_k}
\in C_c^\infty(B_1\times(0,1)).
\end{equation}
Their physical pullbacks are
\begin{equation}\label{eq:pullback}
\psi_{k,j}^\sharp(x,t)
=
\psi_{k,j}\left(
\frac{x-x_0}{r_k},
\frac{t-\tau_k}{r_k^2}
\right).
\end{equation}
Define the coefficient vector \(g_k=(g_{k,1},\ldots,g_{k,m_k})\) by
\begin{equation}\label{eq:gcoeff}
g_{k,j}
:=
r_k^{-1}\pair{G}{\chi_k\psi_{k,j}^\sharp}
=
r_k^{-1}\int_{Q_k}
\left[
\chi_k\psi_{k,j}^\sharp\Pi
-
PU\cdot\nabla(\chi_k\psi_{k,j}^\sharp)
\right]\dxdt.
\end{equation}
The active work coefficient norm is
\begin{equation}\label{eq:active-norm}
\Aact_k(G):=|g_k|_{\ell^2}.
\end{equation}
This is a finite family of continuous distributional pairings. It is deliberately defined without assuming \(G\in L^2\). If \(G\) has additional square integrability and the profiles are chosen as a weighted orthonormal family, \eqref{eq:active-norm} can be identified with a finite-dimensional projection norm.

Choose an invertible matrix
\begin{equation}\label{eq:Bmatrix}
B_k=(b^k_{\alpha j})_{\alpha,j=1}^{m_k}
\in\R^{m_k\times m_k}
\end{equation}
and set
\begin{equation}\label{eq:eta-alpha}
\eta_{k,\alpha}
=
\sum_{j=1}^{m_k}b^k_{\alpha j}\psi_{k,j}.
\end{equation}
Choose \(\eps_k>0\) so that
\begin{equation}\label{eq:eps-positive}
\eps_k\max_{1\le\alpha\le m_k}
\norm{\eta_{k,\alpha}}{L^\infty(B_1\times(0,1))}
\le\frac12.
\end{equation}
Define \(m_k+1\) nonnegative weights on \(Q_k\) by
\begin{align}
\phi_{k,0}(x,t)&=\chi_k(x),
\label{eq:phi0}
\\
\phi_{k,\alpha}(x,t)
&=
\chi_k(x)
\left[
1+\eps_k\eta_{k,\alpha}\left(
\frac{x-x_0}{r_k},
\frac{t-\tau_k}{r_k^2}
\right)
\right],
\quad 1\le\alpha\le m_k.
\label{eq:phialpha}
\end{align}
Because the profiles are compactly supported in the open reference time interval,
\begin{equation}\label{eq:common-endpoints}
\phi_{k,\alpha}(\cdot,\tau_k)
=
\phi_{k,\alpha}(\cdot,\tau_{k+1})
=
\chi_k
\end{equation}
for every \(\alpha=0,\ldots,m_k\).

Set
\begin{equation}\label{eq:ck}
c_k
:=
\frac{\eps_k}{2\sqrt{m_k}}\,
\sigma_{\min}(B_k),
\end{equation}
where \(\sigma_{\min}(B_k)>0\) is the smallest singular value.

\subsubsection{Pressure--flux cancellation ledger}

For any admissible \(\phi\), define the two signed channels
\begin{equation}\label{eq:channels}
\mathcal F_{I,r}[\phi]
:=
r^{-1}\int_I\int\phi\Pi\dxdt,
\qquad
\mathcal P_{I,r}[\phi]
:=
-r^{-1}\int_I\int PU\cdot\nabla\phi\dxdt.
\end{equation}
Then \(\W_{I,r}[\phi]=\mathcal F_{I,r}[\phi]+\mathcal P_{I,r}[\phi]\). The nonnegative cancellation quantity is
\begin{equation}\label{eq:cancellation}
\Cpf_{I,r}[\phi]
:=
\abs{\mathcal F_{I,r}[\phi]}
+
\abs{\mathcal P_{I,r}[\phi]}
-
\abs{\W_{I,r}[\phi]}
\ge0.
\end{equation}
Thus individual pressure and flux activity that disappears from the combined observable is recorded by \(\Cpf\), not absorbed into localization error.

\subsubsection{Main theorem}

\begin{theorem}[Combined pressure--flux work depletion on a fixed finite chain]\label{thm:main}
Assume the setup above. For every \(k=0,\ldots,N-1\), there is an index
\[
\beta_k\in\{0,1,\ldots,m_k\}
\]
such that, with
\begin{equation}\label{eq:selected}
\widehat\phi_k:=\phi_{k,\beta_k},
\qquad
\W_k:=\W_k[\widehat\phi_k],
\qquad
\D_k:=\D_k[\widehat\phi_k],
\qquad
\Lk_k:=\Lk_k[\widehat\phi_k],
\end{equation}
one has
\begin{equation}\label{eq:extraction-main}
|\W_k|\ge c_k\Aact_k(G).
\end{equation}
Define
\begin{equation}\label{eq:Wpm}
\W_k^+=\max\{\W_k,0\},
\qquad
\W_k^-=\max\{-\W_k,0\},
\end{equation}
and weights
\begin{equation}\label{eq:wk}
w_k=\frac{r_k}{r_0}.
\end{equation}
Then
\begin{equation}\label{eq:main-chain}
\boxed{
\sum_{k=0}^{N-1}w_k\bigl(\W_k^++\D_k\bigr)
\le
\Ek_0^-
+
\sum_{k=0}^{N-1}w_k|\Lk_k|
+
\sum_{k=0}^{N-1}w_k\W_k^-.
}
\end{equation}
In particular, if
\begin{equation}\label{eq:F-B-sets}
\mathcal I_+
=\{k:\W_k\ge0\},
\qquad
\mathcal I_-
=\{k:\W_k<0\},
\end{equation}
then
\begin{equation}\label{eq:main-active}
\boxed{
\sum_{k\in\mathcal I_+}w_kc_k\Aact_k(G)
+
\sum_{k=0}^{N-1}w_k\D_k
\le
\Ek_0^-
+
\sum_{k=0}^{N-1}w_k|\Lk_k|
+
\sum_{k\in\mathcal I_-}w_k\W_k^-.
}
\end{equation}
For every \(k\in\mathcal I_-\), the explicit backscatter quantity satisfies
\begin{equation}\label{eq:backscatter-main}
\Bback_k^{\mathrm{back}}
:=\W_k^-
\ge c_k\Aact_k(G).
\end{equation}
All quantities in \eqref{eq:main-chain}--\eqref{eq:backscatter-main} are dimensionless under Navier--Stokes scaling.
\end{theorem}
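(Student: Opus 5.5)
The plan has three parts. The extraction bound \eqref{eq:extraction-main} is pure linear algebra. The chain estimate \eqref{eq:main-chain} comes from an exact localized resolved-energy identity on each slab, telescoped using the nesting \eqref{eq:nesting}. The remaining statements are bookkeeping.

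\emph{Step 1 (extraction).} The functional $\phi\mapsto\W_k[\phi]=r_k^{-1}\pair{G}{\phi}$ is linear. Substituting \eqref{eq:phialpha} and \eqref{eq:eta-alpha}, together with the definition \eqref{eq:gcoeff}, gives
\[
\W_k[\phi_{k,\alpha}]=\W_k[\phi_{k,0}]+\eps_k\,(B_kg_k)_\alpha,\qquad 1\le\alpha\le m_k .
\]
Since $|B_kg_k|_{\ell^2}\ge\sigma_{\min}(B_k)|g_k|_{\ell^2}$, some index $\alpha_*$ satisfies $|(B_kg_k)_{\alpha_*}|\ge\sigma_{\min}(B_k)\Aact_k(G)/\sqrt{m_k}$. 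There are two cases.
\begin{itemize}
\item If $|\W_k[\phi_{k,0}]|\ge c_k\Aact_k(G)$, take $\beta_k=0$.
\item Otherwise take $\beta_k=\alpha_*$. The reverse triangle inequality gives $|\W_k[\phi_{k,\alpha_*}]|\ge 2c_k\Aact_k(G)-c_k\Aact_k(G)$, by the definition \eqref{eq:ck}.
\end{itemize}
The condition \eqref{eq:eps-positive} guarantees $\phi_{k,\alpha}\ge\frac12\chi_k\ge0$, so every selected weight is an admissible nonnegative test function.

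\emph{Step 2 (slab identity).} Mollify \eqref{eq:NS} in space to get $\partial_tU-\Delta U+\nabla\cdot(U\otimes U+R)+\nabla P=0$, with $\nabla\cdot U=0$. Test against $U\phi$ for $\phi=\widehat\phi_k$ on $I_k=(\tau_k,\tau_{k+1})$ and integrate by parts. The stress term gives $-\int\phi U\cdot\nabla\cdot R=-\int\phi\Pi+\int(RU)\cdot\nabla\phi$, and the pressure term gives $\int PU\cdot\nabla\phi$. Dividing by $r_k$, I expect the exact identity
\[
\Ek_k^+-\Ek_k^-+\D_k+\W_k=\Lk_k .
\]
Here the endpoint terms only involve $\chi_k$ because of the common traces \eqref{eq:common-endpoints}. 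This identity is the main technical obstacle. Spatially, $U$ is smooth on the slab with all derivatives in $L^\infty_tL^2_x$. In time, $\partial_tU$ is only known through the equation, as an element of $L^{1}_tL^\infty_{x,\loc}$ (using $u\in L^2_tH^1$, $u\otimes u\in L^1$, $p\in L^{3/2}$, all mollified). So $t\mapsto K_\phi(t)$ is absolutely continuous and the chain rule holds. The weakly continuous representative of $u$ makes $U(\cdot,t)$ continuous in $t$ locally uniformly, which justifies evaluating at the endpoints $\tau_k,\tau_{k+1}$. I would state this as a lemma, proved by mollifying in time as well and passing to the limit.

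\emph{Step 3 (telescoping and consequences).} From Step 2, $\W_k^++\D_k\le\Ek_k^--\Ek_k^++|\Lk_k|+\W_k^-$. Multiply by $w_k=r_k/r_0$. Then $w_k\Ek_k^\pm=r_0^{-1}K_{\chi_k}(\tau_k\text{ or }\tau_{k+1})$, and the nesting \eqref{eq:nesting} gives
\[
w_{k+1}\Ek_{k+1}^-=r_0^{-1}K_{\chi_{k+1}}(\tau_{k+1})\le r_0^{-1}K_{\chi_k}(\tau_{k+1})=w_k\Ek_k^+ .
\]
Summing over $k$ telescopes to $w_0\Ek_0^- - w_{N-1}\Ek_{N-1}^+$. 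Since $w_0=1$, dropping the last (nonnegative) term yields \eqref{eq:main-chain}.

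For \eqref{eq:main-active}, note $\W_k^+=|\W_k|\ge c_k\Aact_k(G)$ on $\mathcal I_+$ by Step 1. On $\mathcal I_+$ the negative part vanishes, and on $\mathcal I_-$ the positive part is zero, so the sums restrict as stated. The backscatter bound \eqref{eq:backscatter-main} is Step 1 applied on $\mathcal I_-$, where $\W_k^-=|\W_k|$.

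Dimensionlessness follows from the scaling $u\mapsto\lambda u(\lambda x,\lambda^2t)$ and $p\mapsto\lambda^2p$, with $\ell$ and $r_k$ scaled accordingly. Under it, $r^{-1}\int|U|^2\phi\,dx$ and each $r^{-1}\iint$ term in \eqref{eq:W-def}--\eqref{eq:L-def} are invariant, as are $c_k$ and $w_k$.
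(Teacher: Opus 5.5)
Your proposal is correct and follows the paper's route: the same linearity, smallest-singular-value and triangle-inequality extraction (your two-case selection is equivalent to the paper's bound $\max\{|\W_k[\phi_{k,0}]|,|\W_k[\phi_{k,\alpha}]|\}\ge\frac12|\W_k[\phi_{k,\alpha}]-\W_k[\phi_{k,0}]|$), the slab-wise resolved energy identity with common endpoint traces, and telescoping via the nesting $\chi_{k+1}\le\chi_k$ together with $w_kr_k^{-1}=r_0^{-1}$. The only difference is in justifying the time integration: you use absolute continuity of $t\mapsto U(\cdot,t)$, which follows from $\partial_tU\in L^1_tL^\infty_{x,\loc}$, whereas the paper uses Steklov averages, and both arguments are valid.
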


\begin{remark}[What the theorem does and does not say]\label{rem:scope}
The theorem is unconditional at fixed \(N\), fixed \(\ell>0\), and fixed active profiles. It does not assert that \(\mathcal I_-\) is small, that \(c_k\) is uniform in a moving-window limit, or that \(\sum_k w_k|\Lk_k|\) is summable as \(N\to\infty\). Such statements require additional PDE information not contained in the local energy identity.
\end{remark}

\subsection{The Navier--Stokes-generated coarse package}\label{sec:package}

\begin{lemma}[Common coarse package]\label{lem:coarse-package}
On every interior subcylinder whose spatial \(\ell\)-neighborhood is contained in \(B_{2r_0}(x_0)\), the fields in \eqref{eq:coarse-package} satisfy
\begin{equation}\label{eq:coarse-momentum}
\partial_tU-\Delta U+\nabla\cdot(U\otimes U)+\nabla P
=-\nabla\cdot R,
\qquad
\nabla\cdot U=0
\end{equation}
in distributions. Moreover, \(R=R^T\ge0\) pointwise almost everywhere, and locally
\begin{align}
\norm{U}{L^3}&\le\norm{u}{L^3},
\label{eq:U-L3}
\\
\norm{P}{L^{3/2}}&\le\norm{p}{L^{3/2}},
\label{eq:P-L32}
\\
\norm{R}{L^{3/2}}&\le2\norm{u}{L^3}^2,
\label{eq:R-L32}
\\
\norm{\nabla U}{L^3}&\le C\ell^{-1}\norm{u}{L^3}.
\label{eq:gradU-L3}
\end{align}
Consequently,
\begin{equation}\label{eq:Pi-L1}
\Pi\in L^1_{\mathrm{loc}},
\qquad
PU\in L^1_{\mathrm{loc}},
\qquad
RU\in L^1_{\mathrm{loc}}.
\end{equation}
Finally, \(U\) has a representative in \(C_tL^2_x\) on compact interior spatial sets. In \eqref{eq:U-L3}--\eqref{eq:gradU-L3}, the left-hand norms may be taken on any fixed interior cylinder and the right-hand norms on its spatial \(\ell\)-neighborhood.
\end{lemma}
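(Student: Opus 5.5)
We prove the lemma by mollifying the Navier--Stokes system in space, then reading off the structural facts about \(R\) from the convolution representation, and finally collecting the integrability statements by Hölder and Young.

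\textbf{The filtered equation.} Fix an interior subcylinder whose spatial \(\ell\)-neighborhood lies in \(B_{2r_0}(x_0)\). For \(x\) in the subcylinder, test the distributional equation \eqref{eq:NS} against \(\rho_\ell(x-\cdot)\varphi(t)\) with \(\varphi\in C_c^\infty\) in time. Since \(S_\ell\) acts only in space, it commutes with \(\partial_t\), \(\Delta\) and \(\nabla\) on such sets. This gives
\[
\partial_tU-\Delta U+\nabla\cdot S_\ell(u\otimes u)+\nabla P=0,
\qquad \nabla\cdot U=0.
\]
Writing \(S_\ell(u\otimes u)=U\otimes U+R\) yields \eqref{eq:coarse-momentum}.

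\textbf{Symmetry and positivity of \(R\).} Symmetry of \(R\) is immediate. For positivity, use the exact identity
\[
R(x,t)=\int\rho_\ell(y)\bigl(u(x-y,t)-U(x,t)\bigr)\otimes\bigl(u(x-y,t)-U(x,t)\bigr)\dy,
\]
which holds because \(\int\rho_\ell=1\) and \(\int\rho_\ell(y)u(x-y,t)\dy=U(x,t)\). Since \(\rho\ge0\), contracting with any \(\xi\in\R^3\) gives \(\xi\cdot R\xi\ge0\). This holds almost everywhere, since \(u\otimes u\in L^{3/2}\) makes the integrals finite for a.e. \((x,t)\).

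\textbf{Norm bounds.} By Young's inequality with \(\norm{\rho_\ell}{L^1}=1\), applied at each fixed time and then integrated in time, we get \eqref{eq:U-L3} and \eqref{eq:P-L32}, with the right-hand norms on the \(\ell\)-neighborhood. For \eqref{eq:R-L32}, the triangle inequality gives
\[
\norm{R}{L^{3/2}}\le\norm{S_\ell(u\otimes u)}{L^{3/2}}+\norm{U\otimes U}{L^{3/2}}\le\norm{u}{L^3}^2+\norm{U}{L^3}^2\le2\norm{u}{L^3}^2.
\]
For \eqref{eq:gradU-L3}, write \(\nabla U=(\nabla\rho_\ell)*u\) and use \(\norm{\nabla\rho_\ell}{L^1}=\ell^{-1}\norm{\nabla\rho}{L^1}\).

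\textbf{Local integrability.} Hölder's inequality with exponents \(3/2\) and \(3\) gives \(\Pi=-R:\nabla U\in L^1\), \(PU\in L^1\) and \(RU\in L^1\) locally, which is \eqref{eq:Pi-L1}.

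\textbf{Time continuity.} This is the only delicate point. The suitable weak solution has a weakly continuous representative in \(L^2_x\). For fixed \(x\), \(U(x,t)=\langle u(\cdot,t),\rho_\ell(x-\cdot)\rangle\) is then continuous in \(t\), and \(|U(x,t)|\le\norm{\rho_\ell}{L^2}\norm{u(t)}{L^2}\), which is bounded uniformly in \(t\) by \(u\in L^\infty_tL^2_x\). Similarly, \(\nabla U\) is pointwise continuous in \(t\) and uniformly bounded, so the family \(\{U(\cdot,t)\}\) is equicontinuous in \(x\). Together with the pointwise continuity in \(t\), this gives uniform-in-\(x\) continuity on compact sets. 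Dominated convergence on a compact spatial set then gives \(U\in C_tL^2_x\) there, and in fact \(U\) is continuous in \((x,t)\).
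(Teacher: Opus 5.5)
Your proof is correct and follows the paper's argument essentially step by step: the filtered equation, positivity of $R$ as a covariance (the same thing as the paper's Jensen argument), and the Young and H\"older bounds. The only real variation is the time-continuity step. You combine pointwise continuity of $t\mapsto U(x,t)=\langle u(t),\rho_\ell(x-\cdot)\rangle$ with uniform bounds on $U$ and $\nabla U$ and dominated convergence. The paper instead observes that $f\mapsto(\rho_\ell*f)|_K$ is a compact operator $L^2\to L^2(K)$, which turns $u(t_n)\rightharpoonup u(t)$ into strong convergence $U(t_n)\to U(t)$. Both arguments rest on the same smoothing by $\rho_\ell$, and yours additionally gives joint continuity of $U$ in $(x,t)$; for $C_tL^2_x$ alone, the uniform bound on $U$ and dominated convergence already suffice.
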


\begin{proof}
Spatial convolution of \eqref{eq:NS} gives
\[
\partial_tU-\Delta U+\nabla\cdot S_\ell(u\otimes u)+\nabla P=0,
\qquad
\nabla\cdot U=0.
\]
Since
\[
S_\ell(u\otimes u)=U\otimes U+R,
\]
we obtain \eqref{eq:coarse-momentum}.

Symmetry of \(R\) is immediate. For \(\xi\in\R^3\),
\begin{align*}
\xi\cdot R(x,t)\xi
&=
S_\ell\bigl((\xi\cdot u)^2\bigr)(x,t)
-
\bigl(S_\ell(\xi\cdot u)(x,t)\bigr)^2
\\
&\ge0
\end{align*}
by Jensen's inequality, because \(\rho_\ell\) is nonnegative and has unit mass. Thus \(R\) is a covariance tensor and is positive semidefinite.

The estimates \eqref{eq:U-L3} and \eqref{eq:P-L32} follow from Young's inequality. Pointwise,
\[
|R|
\le
S_\ell(|u|^2)+|U|^2,
\]
so another application of Young's inequality gives \eqref{eq:R-L32}. Since
\[
\nabla U=(\nabla\rho_\ell)*u,
\qquad
\norm{\nabla\rho_\ell}{L^1}
=\ell^{-1}\norm{\nabla\rho}{L^1},
\]
we obtain \eqref{eq:gradU-L3}. H\"older's inequality then yields \eqref{eq:Pi-L1}.

A local energy-class weak solution has a weakly continuous \(L^2\)-representative. For a compact set \(K\) whose \(\ell\)-neighborhood is interior, the map
\[
T_\ell:L^2\longrightarrow L^2(K),
\qquad
T_\ell f=(\rho_\ell*f)|_K,
\]
is compact, because it maps bounded sets into bounded subsets of every spatial Sobolev space on \(K\). A compact linear map sends weakly convergent sequences to strongly convergent sequences. Applying this observation to \(u(t_n)\rightharpoonup u(t)\) shows that \(U(t_n)\to U(t)\) strongly in \(L^2(K)\). Hence \(U\in C_tL^2_x(K)\).
\end{proof}

\begin{remark}
The use of one common \(\ell\) is structural. If a different coarse field is introduced at every scale, endpoint energies generally belong to different resolved systems and there is no automatic common telescoping budget.
\end{remark}

\subsection{Local combined work identity and pressure gauge}\label{sec:identity}

\begin{lemma}[Local coarse energy identity with endpoint traces]\label{lem:local-identity}
Let \(I=(t_-,t_+)\) and let \(\phi\in C^\infty([t_-,t_+];C_c^\infty)\) be nonnegative. Then
\begin{equation}\label{eq:local-identity}
\boxed{
\W_{I,r}[\phi]
+
\D_{I,r}[\phi]
=
r^{-1}\bigl(K_\phi(t_-)-K_\phi(t_+)\bigr)
+
\Lk_{I,r}[\phi].
}
\end{equation}
Equivalently, the coarse fields satisfy the distributional local balance
\begin{equation}\label{eq:pointwise-energy}
\partial_t\frac{|U|^2}{2}
-
\Delta\frac{|U|^2}{2}
+
|\nabla U|^2
+
\nabla\cdot
\left[
\left(\frac{|U|^2}{2}+P\right)U+RU
\right]
=-\Pi.
\end{equation}
\end{lemma}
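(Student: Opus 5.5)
The plan is to derive the pointwise balance \eqref{eq:pointwise-energy} first, testing the coarse momentum equation \eqref{eq:coarse-momentum} of Lemma~\ref{lem:coarse-package} against $U\phi$. Then integrate against $\phi$ over $I$ and use the $C_tL^2_x$ continuity of $U$ to produce the endpoint traces $K_\phi(t_\pm)$.

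\textbf{Step 1: regularity of $U$.} $U=S_\ell u$ is smooth in space for a.e. $t$, with every spatial derivative bounded by $\ell$-dependent multiples of $\norm{u(t)}{L^2}$ on interior sets. Hence $U,\nabla U,\nabla^2U\in L^\infty_tL^\infty_x$ locally, and $\partial_tU$ is given by the equation. Concretely,
\[
\partial_tU=\Delta U-\nabla\cdot S_\ell(u\otimes u)-\nabla P.
\]
The right-hand side lies in $L^1_tL^\infty_x$ locally, because $S_\ell(u\otimes u)=(\nabla\rho_\ell)*(u\otimes u)$ after differentiation is bounded by $\ell^{-4}\norm{u}{L^2}^2$, and $\nabla P=(\nabla\rho_\ell)*p$ lies in $L^{3/2}_tL^\infty_x$. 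So $U\in W^{1,1}_t$ with values in $C^\infty_x$ on interior compact sets. This justifies the chain rule $\partial_t\frac12|U|^2=U\cdot\partial_tU$ in $L^1$, and no mollification in time is needed.

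\textbf{Step 2: dot the equation with $U$ and rewrite each term as a divergence plus remainder.}
\begin{itemize}[label={}]
\item $-\Delta U\cdot U=-\Delta\frac{|U|^2}{2}+|\nabla U|^2$.
\item $\nabla\cdot(U\otimes U)\cdot U=\nabla\cdot\bigl(\frac{|U|^2}{2}U\bigr)$, using $\nabla\cdot U=0$.
\item $\nabla P\cdot U=\nabla\cdot(PU)$.
\item $(\nabla\cdot R)\cdot U=\nabla\cdot(RU)-R:\nabla U=\nabla\cdot(RU)+\Pi$, using the symmetry of $R$.
\end{itemize}
Collecting these terms gives \eqref{eq:pointwise-energy} a.e., and hence in distributions.

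\textbf{Step 3: integrate against $\phi$ over $B_r\times I$.} Integrate by parts in space, which is legitimate since $\phi$ has compact spatial support. For the time derivative, use that $t\mapsto K_\phi(t)$ is absolutely continuous with
\[
K_\phi'=\int\bigl(U\cdot\partial_tU\,\phi+\tfrac12|U|^2\partial_t\phi\bigr).
\]
This gives
\[
K_\phi(t_+)-K_\phi(t_-)-\iint\tfrac12|U|^2(\partial_t\phi+\Delta\phi)+\iint\phi|\nabla U|^2-\iint\bigl[(\tfrac12|U|^2+P)U+RU\bigr]\cdot\nabla\phi=-\iint\phi\Pi.
\]
Rearranging, grouping $\phi\Pi-PU\cdot\nabla\phi$ into $r\W_{I,r}[\phi]$ via \eqref{eq:G-pairing}, and dividing by $r$ yields \eqref{eq:local-identity} with $\Lk_{I,r}$ exactly as in \eqref{eq:L-def}. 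All the integrands are $L^1$ by \eqref{eq:Pi-L1}.

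\textbf{Main obstacle.} The only delicate point is justifying the time integration up to the endpoints $t_\pm$, so that the pointwise values $K_\phi(t_\pm)$ are legitimate, rather than a.e. times or Lebesgue points. I would handle this through the $W^{1,1}_t$ regularity of $U$ from Step 1. Combined with the $C_tL^2_x$ representative of Lemma~\ref{lem:coarse-package}, this makes $K_\phi$ absolutely continuous and identifies its endpoint values with the continuous traces.

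If a sharper argument is preferred, I would instead integrate the distributional identity against $\phi\,\zeta_n(t)$, where $\zeta_n$ are Lipschitz time cutoffs approximating $\one_I$. Passing to the limit then uses the continuity of $K_\phi$.
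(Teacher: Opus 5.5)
Your proof is correct, but it handles the one delicate point, the time derivative and the endpoint traces, by a different route from the paper.

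\textbf{The paper's route.} It does the same formal computation: dot \eqref{eq:coarse-momentum} with $U$ and integrate by parts against $\phi$. It then justifies this by Steklov averaging in time. It tests the averaged equation with $[U]_h\phi$ and passes $h\downarrow0$ using only the local Lebesgue bounds of \Cref{lem:coarse-package}: $[U]_h\to U$ in $L^3_{\mathrm{loc}}$, $\nabla[U]_h\to\nabla U$ in $L^2_{\mathrm{loc}}$, and $[R]_h\to R$, $[P]_h\to P$. The endpoint terms are controlled by the $C_tL^2_x$ continuity of $U$, which the paper obtains separately from the compactness of $f\mapsto(\rho_\ell*f)|_K$.

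\textbf{Your route.} You use the full strength of the fixed spatial filter. Because $U$, $S_\ell(u\otimes u)$ and $P$ are smooth in $x$ with $\ell$-dependent bounds, the coarse equation itself gives $\partial_tU\in L^{3/2}_tC^k_x$ on interior compacts. Hence $U\in W^{1,1}_t$ there, the chain rule $\partial_t\frac12|U|^2=U\cdot\partial_tU$ holds in $L^1$, and $K_\phi$ is absolutely continuous on $[t_-,t_+]$ without any limiting procedure. This representative is continuous in $t$ and agrees a.e.\ with $S_\ell$ of the weakly continuous representative of $u$, so the two coincide at every time. The values $K_\phi(t_\pm)$ are therefore the intended ones.

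\textbf{What each buys.} Your argument is shorter and fully rigorous. It yields the time continuity of $U$, and hence the meaning of the endpoint traces, as a byproduct rather than an input. Its constants degenerate as $\ell\to0$, which is harmless because $\ell$ is fixed. The paper's Steklov argument never uses the spatial smoothing, only integrability. It would therefore carry over to rougher fields whose time derivative lies only in a negative Sobolev space, at the cost of needing the separate continuity lemma for the endpoint terms.
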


\begin{proof}
The formal computation follows by taking the scalar product of \eqref{eq:coarse-momentum} with \(U\). The only point is justification in time. We give the standard Steklov-average argument.

For \(h>0\), write
\[
[f]_h(t)=\frac1h\int_t^{t+h}f(s)\dd s
\]
on a slightly shorter interval. Apply this averaging to \eqref{eq:coarse-momentum} and test the averaged equation with \([U]_h\phi\). Every term is integrable by \Cref{lem:coarse-package}. Integrating by parts in space and time gives the classical averaged identity. As \(h\downarrow0\),
\[
[U]_h\to U\quad\text{in }L^3_{\mathrm{loc}},
\qquad
\nabla[U]_h\to\nabla U\quad\text{in }L^2_{\mathrm{loc}},
\]
while
\[
[U\otimes U]_h\to U\otimes U,
\quad
[R]_h\to R,
\quad
[P]_h\to P
\]
in their natural local Lebesgue spaces. The endpoint terms converge because \(U\in C_tL^2_x\) after spatial coarse graining. Passing to the limit yields \eqref{eq:pointwise-energy}.

Multiplying \eqref{eq:pointwise-energy} by \(\phi\), integrating over \(I\times\R^3\), and integrating by parts gives
\begin{align*}
&K_\phi(t_+)-K_\phi(t_-)
+
\int_I\int\phi|\nabla U|^2\dxdt
+
\int_I\int\phi\Pi\dxdt
\\
&\quad=
\int_I\int
\frac12|U|^2(\partial_t\phi+\Delta\phi)\dxdt
+
\int_I\int
\left[
\left(\frac12|U|^2+P\right)U+RU
\right]\cdot\nabla\phi\dxdt.
\end{align*}
Move the pressure-transport term to the left and divide by \(r\). This is exactly \eqref{eq:local-identity}.
\end{proof}

\begin{remark}[Why the temporal term is necessary]
If \(\phi\) depends on time, the term
\[
r^{-1}\int\frac12|U|^2\partial_t\phi
\]
is part of the exact PDE identity. Space-time work extraction without this term is incompatible with the localized energy equation. The endpoint condition \eqref{eq:common-endpoints} ensures that this extra localization cost does not alter the endpoint profiles used in the finite-chain sum.
\end{remark}

\begin{lemma}[Pressure-gauge invariance]\label{lem:gauge}
Let \(a\in L^{3/2}_{\mathrm{loc}}(I)\) depend only on time. Replacing \(P\) by \(P+a(t)\) leaves \(\W_{I,r}[\phi]\) unchanged.
\end{lemma}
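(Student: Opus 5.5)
The statement to prove is the pressure-gauge invariance of \Cref{lem:gauge}. The plan is to compute the change in \(\W_{I,r}[\phi]\) directly from its definition \eqref{eq:W-def}. Only the pressure-transport term \(-r^{-1}\int_I\int PU\cdot\nabla\phi\dxdt\) involves \(P\), since \(\Pi=-R:\nabla U\) does not. Replacing \(P\) by \(P+a(t)\) therefore changes \(\W_{I,r}[\phi]\) by exactly
\[
-r^{-1}\int_I a(t)\left(\int_{\R^3}U(x,t)\cdot\nabla\phi(x,t)\dx\right)\dt .
\]
It suffices to show that this inner spatial integral vanishes for almost every \(t\in I\).

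For a.e. \(t\), \(U(\cdot,t)=S_\ell u(\cdot,t)\) is smooth in \(x\) on the support of \(\phi(\cdot,t)\). This uses the standing assumption that the \(\ell\)-neighborhood of the relevant region lies in \(B_{2r_0}(x_0)\). Moreover \(U(\cdot,t)\) is divergence free there by \Cref{lem:coarse-package}. Since \(\phi(\cdot,t)\in C_c^\infty\), integration by parts in space has no boundary terms and gives
\[
\int U\cdot\nabla\phi\dx=-\int(\nabla\cdot U)\,\phi\dx=0 .
\]
If one prefers to avoid pointwise smoothness, the same identity follows from the distributional statement \(\nabla\cdot U=0\) tested against \(\phi(\cdot,t)\), or equivalently tested against \(a(t)\phi(x,t)\) on space-time.

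The only step requiring care is integrability, so that the added term is a legitimate Lebesgue integral and Fubini applies. \(U\in L^3_{\loc}\) gives \(U\in L^3_tL^1_x\) on the compact support of \(\phi\). Together with \(a\in L^{3/2}(I)\) and \(\nabla\phi\) bounded, Hölder's inequality in time shows that \(a\,U\cdot\nabla\phi\in L^1(I\times\R^3)\). Fubini then justifies computing the spatial integral first, which vanishes for a.e. \(t\), so the correction term is zero. I expect no real obstacle. The only subtlety is making sure \(\nabla\cdot U=0\) holds on the full support of \(\phi\), which is exactly what the setup's interior assumption on \(\ell\) guarantees.
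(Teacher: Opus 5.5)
Your proposal is correct and follows the paper's proof: the only change is \(-r^{-1}\int_I a(t)\bigl(\int U\cdot\nabla\phi\dx\bigr)\dt\), and the inner integral vanishes for a.e.\ \(t\) after integrating by parts against \(\nabla\cdot U=0\). Your H\"older/Fubini justification, pairing \(a\) in \(L^{3/2}\) in time with \(U\in L^3_tL^1_x\) on the support of \(\phi\), is a detail the paper leaves implicit. It reads the hypothesis as integrability of \(a\) over all of \(I\), not merely locally in the open interval, which is the right reading since \(\nabla\phi\) need not vanish at the endpoints.
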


\begin{proof}
The change in \(\W_{I,r}[\phi]\) is
\[
-r^{-1}\int_I a(t)
\left(\int U(x,t)\cdot\nabla\phi(x,t)\dx\right)\dt.
\]
For almost every \(t\),
\[
\int U\cdot\nabla\phi\dx
=-\int\phi\nabla\cdot U\dx=0.
\]
Thus the change vanishes. Notice that a spatially harmonic pressure is not, in general, a gauge: only functions of time disappear in this way.
\end{proof}

\subsection{Explicit localization leakage}\label{sec:leakage}

For a nonnegative weight on \(Q_r=B_r(x_0)\times I\), define the scale-adapted seminorm
\begin{equation}\label{eq:phi-seminorm}
[\phi]_{\mathfrak C(r)}
:=
\norm{\phi}{L^\infty}
+
r\norm{\nabla\phi}{L^\infty}
+
r^2\left(
\norm{\partial_t\phi}{L^\infty}
+
\norm{\nabla^2\phi}{L^\infty}
\right).
\end{equation}
Let
\begin{equation}\label{eq:derivative-region}
\mathscr A_\phi
:=
\supp(\partial_t\phi)
\cup
\supp(\nabla\phi)
\cup
\supp(\nabla^2\phi).
\end{equation}

\begin{lemma}[Scale-invariant leakage bound]\label{lem:leakage}
If \([\phi]_{\mathfrak C(r)}\le M_\phi\), then
\begin{equation}\label{eq:leakage-bound}
\boxed{
|\Lk_{I,r}[\phi]|
\le
C M_\phi
\left[
 r^{-3}\int_{\mathscr A_\phi}|U|^2\dxdt
+
 r^{-2}\int_{\mathscr A_\phi}|U|^3\dxdt
+
 r^{-2}\int_{\mathscr A_\phi}|R|\,|U|\dxdt
\right].
}
\end{equation}
For the weights \(\phi_{k,\alpha}\) in \eqref{eq:phi0}--\eqref{eq:phialpha}, the constants \(M_{\phi_{k,\alpha}}\) are bounded by an explicit number depending only on \(\chi\), \(\eps_k\), \(B_k\), and finitely many \(C^2\)-norms of the reference profiles.
\end{lemma}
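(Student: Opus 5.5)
The plan is to bound the three terms of \(\Lk_{I,r}[\phi]\) in \eqref{eq:L-def} separately by H\"older's inequality in \(L^\infty\times L^1\), using only the size of the derivatives of \(\phi\). The key observation is that every term in \eqref{eq:L-def} contains at least one derivative of \(\phi\): the first contains \(\partial_t\phi+\Delta\phi\), the second and third contain \(\nabla\phi\). Each integrand therefore vanishes almost everywhere outside \(\mathscr A_\phi\), and the integrals may be restricted to \(\mathscr A_\phi\). Integrability of each term on the support is guaranteed by \Cref{lem:coarse-package}: \(U\in L^3_{\loc}\) gives \(|U|^2,|U|^3\in L^1_{\loc}\), and \(RU\in L^1_{\loc}\) by \eqref{eq:Pi-L1}.

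The three estimates are as follows.

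For the temporal--diffusive term,
\[
\left|r^{-1}\int\tfrac12|U|^2(\partial_t\phi+\Delta\phi)\right|
\le r^{-1}\bigl(\norm{\partial_t\phi}{L^\infty}+\sqrt3\,\norm{\nabla^2\phi}{L^\infty}\bigr)\int_{\mathscr A_\phi}|U|^2
\le C M_\phi\, r^{-3}\int_{\mathscr A_\phi}|U|^2,
\]
since \(r^2(\norm{\partial_t\phi}{L^\infty}+\norm{\nabla^2\phi}{L^\infty})\le M_\phi\).

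For the cubic transport term, \(r^{-1}\norm{\nabla\phi}{L^\infty}\le M_\phi r^{-2}\) gives the bound \(\tfrac12 M_\phi r^{-2}\int_{\mathscr A_\phi}|U|^3\).

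For the subfilter transport term, the pointwise bound \(|(RU)\cdot\nabla\phi|\le|R|\,|U|\,|\nabla\phi|\) (operator norm dominated by the Frobenius norm) gives \(M_\phi r^{-2}\int_{\mathscr A_\phi}|R||U|\).

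Summing the three bounds yields \eqref{eq:leakage-bound} with a universal \(C\).

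For the weights \(\phi_{k,\alpha}\), I would estimate \([\phi_{k,\alpha}]_{\mathfrak C(r_k)}\) by the chain rule, using that the pullback variables are parabolically scaled. Writing \(\phi_{k,\alpha}=\chi_k(1+\eps_k\eta_{k,\alpha}^\sharp)\), each spatial derivative produces a factor \(r_k^{-1}\) and each time derivative a factor \(r_k^{-2}\). Hence \(r_k\nabla\), \(r_k^2\nabla^2\) and \(r_k^2\partial_t\) applied to \(\chi_k\) and \(\eta^\sharp_{k,\alpha}\) are bounded by the \(C^2\) norms of \(\chi\) and \(\eta_{k,\alpha}\) in reference variables. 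The Leibniz rule then gives \(M_{\phi_{k,\alpha}}\le C\norm{\chi}{C^2}(1+\eps_k\norm{\eta_{k,\alpha}}{C^2})\). Finally, by \eqref{eq:eta-alpha}, \(\norm{\eta_{k,\alpha}}{C^2}\le\sum_j|b^k_{\alpha j}|\,\norm{\psi_{k,j}}{C^2}\), which depends only on \(B_k\) and the profiles. The case \(\alpha=0\) is the same computation with \(\eps_k=0\).

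I expect no real obstacle here. The only step needing care is the support restriction: the weights \(\phi_{k,\alpha}\) equal \(\chi_k\) near the time endpoints, so \(\partial_t\phi\) is supported only where the profiles are active. The set \(\mathscr A_\phi\) should be taken as the closed union of the supports, and the identity of integrals holds because the integrands vanish pointwise off that set.
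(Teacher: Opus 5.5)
Your proposal is correct and follows the paper's proof essentially step for step. Like the paper, you use the pointwise bounds $|\partial_t\phi+\Delta\phi|\le CM_\phi r^{-2}$ and $|\nabla\phi|\le M_\phi r^{-1}$ in \eqref{eq:L-def} with the integrals restricted to $\mathscr A_\phi$, and then the parabolic chain rule for the pullback weights $\phi_{k,\alpha}$. You are more explicit than the paper about the support restriction and the Leibniz bound $M_{\phi_{k,\alpha}}\le C\norm{\chi}{C^2}\bigl(1+\eps_k\norm{\eta_{k,\alpha}}{C^2}\bigr)$ with $\norm{\eta_{k,\alpha}}{C^2}\le\sum_j|b^k_{\alpha j}|\,\norm{\psi_{k,j}}{C^2}$, which is exactly the dependence the statement asserts.
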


\begin{proof}
From \eqref{eq:L-def} and \eqref{eq:phi-seminorm},
\[
|\partial_t\phi+\Delta\phi|
\le C M_\phi r^{-2},
\qquad
|\nabla\phi|
\le M_\phi r^{-1}.
\]
Substitution into \eqref{eq:L-def} gives \eqref{eq:leakage-bound}. For \(\phi_{k,\alpha}\), the chain rule under the parabolic map \eqref{eq:pullback} yields
\[
|\partial_t\phi_{k,\alpha}|+|\nabla^2\phi_{k,\alpha}|
\le C_k r_k^{-2},
\qquad
|\nabla\phi_{k,\alpha}|
\le C_k r_k^{-1},
\]
with the stated explicit dependence.
\end{proof}

\begin{remark}[Purely spatial cutoffs]
For \(\phi(x,t)=\chi(x)\), the temporal contribution vanishes and \(\mathscr A_\phi\) is the spatial transition annulus. For active space-time weights, derivatives of the interior profiles create an additional, explicitly displayed interior localization region.
\end{remark}

\begin{remark}[Scaling]
Under
\[
U_\lambda(x,t)=\lambda U(\lambda x,\lambda^2t),
\quad
P_\lambda(x,t)=\lambda^2P(\lambda x,\lambda^2t),
\quad
R_\lambda(x,t)=\lambda^2R(\lambda x,\lambda^2t),
\]
the three terms on the right of \eqref{eq:leakage-bound} are invariant. Thus the normalization in \eqref{eq:W-def}--\eqref{eq:L-def} is dimensionally consistent.
\end{remark}

\subsection{Constructive active-work extraction}\label{sec:extraction}

\begin{lemma}[Nonnegative finite-dimensional work extraction]\label{lem:extraction}
For every slab \(Q_k\), there exists \(\beta_k\in\{0,1,\ldots,m_k\}\) such that
\begin{equation}\label{eq:extract-lemma}
\abs{\W_k[\phi_{k,\beta_k}]}
\ge
c_k\Aact_k(G),
\end{equation}
where \(c_k\) is given by \eqref{eq:ck}. Consequently, exactly one of the following alternatives holds:
\begin{align}
\W_k[\phi_{k,\beta_k}]
&\ge c_k\Aact_k(G),
\tag{forward}\label{eq:forward-branch}
\\
-\W_k[\phi_{k,\beta_k}]
&\ge c_k\Aact_k(G).
\tag{backscatter}\label{eq:backscatter-branch}
\end{align}
\end{lemma}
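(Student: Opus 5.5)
The plan is to exploit linearity of the pairing $\phi\mapsto\pair{G}{\phi}$, which is visible from \eqref{eq:G-pairing}. By \eqref{eq:phi0}--\eqref{eq:phialpha},
\[
\phi_{k,\alpha}-\phi_{k,0}=\eps_k\chi_k\,\eta_{k,\alpha}^\sharp
=\eps_k\sum_{j=1}^{m_k}b^k_{\alpha j}\chi_k\psi_{k,j}^\sharp ,
\]
where $\sharp$ denotes the parabolic pullback \eqref{eq:pullback}. Using $\W_k[\phi]=r_k^{-1}\pair{G}{\phi}$ and \eqref{eq:gcoeff}, this gives the exact identity
\[
\W_k[\phi_{k,\alpha}]-\W_k[\phi_{k,0}]=\eps_k\,(B_kg_k)_\alpha,\qquad 1\le\alpha\le m_k .
\]
The pairing is well defined on each weight, since $\Pi,PU\in L^1_{\loc}$ by \Cref{lem:coarse-package}. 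Each $\phi_{k,\alpha}$ is nonnegative because of \eqref{eq:eps-positive}: indeed $1+\eps_k\eta_{k,\alpha}\ge 1/2$.

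Next comes the lower bound for the vector of differences. Since $B_k$ is invertible, $|B_kg_k|_{\ell^2}\ge\sigma_{\min}(B_k)|g_k|_{\ell^2}=\sigma_{\min}(B_k)\Aact_k(G)$. Passing from $\ell^2$ to $\ell^\infty$ costs $\sqrt{m_k}$, so there is an index $\alpha_*$ with
\[
\eps_k|(B_kg_k)_{\alpha_*}|\ge \frac{\eps_k\sigma_{\min}(B_k)}{\sqrt{m_k}}\Aact_k(G)=2c_k\Aact_k(G).
\]
By the triangle inequality, $|\W_k[\phi_{k,\alpha_*}]|+|\W_k[\phi_{k,0}]|\ge 2c_k\Aact_k(G)$. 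Hence at least one of the two numbers is $\ge c_k\Aact_k(G)$, and I take $\beta_k$ to be $\alpha_*$ or $0$ accordingly. This explains the factor $1/2$ in \eqref{eq:ck}. The selection is constructive: compute $g_k$, find the largest entry of $B_kg_k$ in absolute value, and compare the two candidates.

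For the dichotomy, if $\Aact_k(G)>0$ then $|\W_k|\ge c_k\Aact_k(G)>0$, so exactly one sign occurs and gives \eqref{eq:forward-branch} or \eqref{eq:backscatter-branch}. The degenerate case $\Aact_k(G)=0$ needs a convention, since then both inequalities hold trivially whenever $\W_k=0$. I would handle it by assigning the branch according to the sign split of \eqref{eq:F-B-sets}, putting $\W_k\ge0$ into the forward branch. I expect no real obstacle. The only points needing care are this degenerate case and checking that the pairing is linear on the finite span of the weights, which reduces to the integrability already established in \Cref{lem:coarse-package}.
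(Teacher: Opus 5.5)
Your proof is correct and follows the paper's argument essentially step for step. Linearity gives the difference vector $\eps_k B_k g_k$. The $\ell^2$-to-$\ell^\infty$ bound with $\sigma_{\min}(B_k)$ then produces an index whose difference is at least $2c_k\Aact_k(G)$, and the two-measurement triangle inequality selects $\beta_k$. Your remark on the degenerate case $\Aact_k(G)=0$, $\W_k=0$ is a legitimate refinement: there both branches hold, so ``exactly one'' needs the tie-breaking convention of \eqref{eq:F-B-sets}, which the paper's ``the sign alternatives are immediate'' passes over.
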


\begin{proof}
Linearity of \(\W_k[\cdot]\) and definitions \eqref{eq:gcoeff}, \eqref{eq:eta-alpha}, and \eqref{eq:phialpha} give
\begin{equation}\label{eq:difference-measurement}
\W_k[\phi_{k,\alpha}]-\W_k[\phi_{k,0}]
=
\eps_k\sum_{j=1}^{m_k}b^k_{\alpha j}g_{k,j}.
\end{equation}
In vector form,
\[
\left(
\W_k[\phi_{k,\alpha}]-\W_k[\phi_{k,0}]
\right)_{\alpha=1}^{m_k}
=
\eps_k B_kg_k.
\]
Therefore
\[
\max_{1\le\alpha\le m_k}
\abs{\W_k[\phi_{k,\alpha}]-\W_k[\phi_{k,0}]}
\ge
\frac{\eps_k}{\sqrt{m_k}}
\norm{B_kg_k}{\ell^2}
\ge
\frac{\eps_k\sigma_{\min}(B_k)}{\sqrt{m_k}}
|g_k|_{\ell^2}.
\]
For each \(\alpha\), the triangle inequality implies
\[
\max\left\{
\abs{\W_k[\phi_{k,0}]},
\abs{\W_k[\phi_{k,\alpha}]}
\right\}
\ge
\frac12
\abs{\W_k[\phi_{k,\alpha}]-\W_k[\phi_{k,0}]}.
\]
Choosing an index that realizes the preceding lower bound proves \eqref{eq:extract-lemma}. The sign alternatives are immediate.
\end{proof}

\begin{example}[An explicit matrix constant]\label{ex:matrix}
For
\[
B=
\begin{pmatrix}
2&1&0\\
1&2&1\\
0&1&2
\end{pmatrix},
\]
the singular values are \(2-\sqrt2\), \(2\), and \(2+\sqrt2\). Hence, when \(m=3\),
\[
c
=
\frac{\eps}{2\sqrt3}(2-\sqrt2).
\]
The identity matrix is also admissible and gives \(c=\eps/(2\sqrt m)\).
\end{example}

\begin{lemma}[Perturbative stability of the active window]\label{lem:stability}
Let \(B\in\R^{m\times m}\) be invertible. Suppose a perturbed family of measurements satisfies
\begin{equation}\label{eq:perturbed-difference}
\widetilde\W_\alpha-\widetilde\W_0
=
\eps\bigl((Bg)_\alpha+e_\alpha\bigr),
\qquad
\norm{e}{\ell^2}\le\delta\norm{g}{\ell^2}.
\end{equation}
If \(0\le\delta<\sigma_{\min}(B)\), then one of the \(m+1\) measurements obeys
\begin{equation}\label{eq:stable-extraction}
\max_{0\le\alpha\le m}|\widetilde\W_\alpha|
\ge
\frac{\eps}{2\sqrt m}
\bigl(\sigma_{\min}(B)-\delta\bigr)
\norm{g}{\ell^2}.
\end{equation}
In particular, if the perturbed matrix is \(\widetilde B=B+E\), then
\begin{equation}\label{eq:weyl}
\sigma_{\min}(\widetilde B)
\ge
\sigma_{\min}(B)-\norm{E}{\mathrm{op}}.
\end{equation}
An exact affine parabolic pullback has \(E=0\).
\end{lemma}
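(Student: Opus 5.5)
The proof repeats the extraction argument of \Cref{lem:extraction} with the exact difference vector $\eps Bg$ replaced by its perturbed version $\eps(Bg+e)$, and controls the perturbation by the triangle inequality in $\ell^2$. Afterwards the Weyl-type bound \eqref{eq:weyl} is recorded as a standard variational fact. It is what allows an inexact pullback matrix $\widetilde B=B+E$ to be absorbed into a perturbation $e=Eg$ with $\delta=\norm{E}{\mathrm{op}}$.

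\textbf{Lower bound for the perturbation.} By \eqref{eq:perturbed-difference} and the reverse triangle inequality,
\[
\norm{\bigl(\widetilde\W_\alpha-\widetilde\W_0\bigr)_{\alpha=1}^m}{\ell^2}
=\eps\norm{Bg+e}{\ell^2}
\ge \eps\bigl(\norm{Bg}{\ell^2}-\norm{e}{\ell^2}\bigr)
\ge \eps\bigl(\sigma_{\min}(B)-\delta\bigr)\norm{g}{\ell^2}.
\]
Here we use $\norm{Bg}{\ell^2}\ge\sigma_{\min}(B)\norm{g}{\ell^2}$, and the hypothesis $\delta<\sigma_{\min}(B)$ makes the factor positive.

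\textbf{Passing to a single measurement.} Since an $\ell^2$ norm in $\R^m$ is at most $\sqrt m$ times the $\ell^\infty$ norm, some index $\alpha$ satisfies
\[
|\widetilde\W_\alpha-\widetilde\W_0|\ge \frac{\eps}{\sqrt m}\bigl(\sigma_{\min}(B)-\delta\bigr)\norm{g}{\ell^2}.
\]
The triangle inequality gives $\max\{|\widetilde\W_0|,|\widetilde\W_\alpha|\}\ge\frac12|\widetilde\W_\alpha-\widetilde\W_0|$. This yields \eqref{eq:stable-extraction}, exactly as in the proof of \Cref{lem:extraction}.

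\textbf{The Weyl bound.} For \eqref{eq:weyl}, use the variational characterization $\sigma_{\min}(A)=\min_{|v|=1}|Av|$. For each unit $v$,
\[
|(B+E)v|\ge|Bv|-|Ev|\ge\sigma_{\min}(B)-\norm{E}{\mathrm{op}},
\]
and minimizing over $v$ gives the claim. The two parts connect as follows. If the measurements come from $\widetilde B=B+E$, write $\widetilde Bg=Bg+Eg$ and set $e=Eg$. Then $\norm{e}{\ell^2}\le\norm{E}{\mathrm{op}}\norm{g}{\ell^2}$, so $\delta=\norm{E}{\mathrm{op}}$ is admissible, and \eqref{eq:stable-extraction} is consistent with applying \Cref{lem:extraction} directly using $\sigma_{\min}(\widetilde B)$.

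\textbf{Exact pullback.} The final claim $E=0$ holds because, under the exact affine parabolic pullback \eqref{eq:pullback}, identity \eqref{eq:difference-measurement} holds with no error. This follows from linearity of $\W_k$ and the definitions \eqref{eq:gcoeff} and \eqref{eq:phialpha}.

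No step is a genuine obstacle. The only point requiring care is interpretation: $\delta<\sigma_{\min}(B)$ must be read as a relative error bound with respect to $\norm{g}{\ell^2}$, so the estimate is vacuous but true when $g=0$.
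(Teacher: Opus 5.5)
Your proposal is correct and follows the paper's proof: the reverse triangle inequality gives $\eps\norm{Bg+e}{\ell^2}\ge\eps(\sigma_{\min}(B)-\delta)\norm{g}{\ell^2}$, and then the $\ell^2$-to-$\ell^\infty$ bound and the two-measurement triangle inequality from the extraction lemma finish the estimate. The only addition is that you prove the Weyl-type bound directly via $\sigma_{\min}(A)=\min_{|v|=1}|Av|$, whereas the paper simply cites it as the standard singular-value perturbation inequality.
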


\begin{proof}
From \eqref{eq:perturbed-difference},
\[
\norm{Bg+e}{\ell^2}
\ge
\norm{Bg}{\ell^2}-\norm{e}{\ell^2}
\ge
\bigl(\sigma_{\min}(B)-\delta\bigr)
\norm{g}{\ell^2}.
\]
The \(\ell^2\)-to-\(\ell^\infty\) bound and the same two-measurement triangle inequality used in \Cref{lem:extraction} give \eqref{eq:stable-extraction}. Formula \eqref{eq:weyl} is the standard singular-value perturbation inequality.
\end{proof}

\begin{remark}
\Cref{lem:stability} is the correct place to account for chart errors, profile truncation, or an approximate dual family. Such effects change an explicit finite matrix or create an explicit residual vector; they should not be hidden inside an unspecified observability constant.
\end{remark}

\subsection{Weighted telescoping on the finite chain}\label{sec:chain}

\begin{lemma}[Correct finite-chain telescoping]\label{lem:telescoping}
Let \(\widehat\phi_k\) be any nonnegative weights satisfying
\[
\widehat\phi_k(\cdot,\tau_k)
=
\widehat\phi_k(\cdot,\tau_{k+1})
=
\chi_k.
\]
Set \(w_k=r_k/r_0\). Then
\begin{equation}\label{eq:endpoint-telescope}
\sum_{k=0}^{N-1}
w_k(\Ek_k^- -\Ek_k^+)
\le
\Ek_0^-.
\end{equation}
Moreover, if \(\W_k,\D_k,\Lk_k\) are defined using \(\widehat\phi_k\), then
\begin{equation}\label{eq:universal-chain}
\sum_{k=0}^{N-1}w_k(\W_k^++\D_k)
\le
\Ek_0^-
+
\sum_{k=0}^{N-1}w_k|\Lk_k|
+
\sum_{k=0}^{N-1}w_k\W_k^-.
\end{equation}
\end{lemma}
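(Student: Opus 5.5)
The plan has two parts. First I prove the endpoint inequality \eqref{eq:endpoint-telescope} using only the nesting \eqref{eq:nesting} and the choice $w_k=r_k/r_0$. Then I sum the exact local identity of \Cref{lem:local-identity} over the slabs and insert that inequality.

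\emph{Endpoint comparison.} The weights are chosen so that $w_k\Ek_k^\pm=r_0^{-1}K_{\chi_k}(\cdot)$: the factor $r_k^{-1}$ in \eqref{eq:E-endpoints} cancels against $w_k$. For $0\le k\le N-2$ the time $\tau_{k+1}$ is shared by slabs $k$ and $k+1$. By \Cref{lem:coarse-package}, $U$ has a representative in $C_tL^2_x$, so $K_\phi(\tau_{k+1})$ is well defined. Since $\chi_{k+1}\le\chi_k$ and $|U|^2\ge0$, I get
\[
w_{k+1}\Ek_{k+1}^-=r_0^{-1}K_{\chi_{k+1}}(\tau_{k+1})\le r_0^{-1}K_{\chi_k}(\tau_{k+1})=w_k\Ek_k^+.
\]
Regroup the sum as
\[
\sum_{k=0}^{N-1}w_k(\Ek_k^--\Ek_k^+)=w_0\Ek_0^-+\sum_{k=0}^{N-2}\bigl(w_{k+1}\Ek_{k+1}^--w_k\Ek_k^+\bigr)-w_{N-1}\Ek_{N-1}^+.
\]
Every bracket in the middle sum is $\le0$, the last term is $\le0$ because kinetic energy is nonnegative, and $w_0=1$. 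This gives \eqref{eq:endpoint-telescope}.

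\emph{Slab identity and summation.} On slab $k$, apply \Cref{lem:local-identity} with $I=I_k=(\tau_k,\tau_{k+1})$, $r=r_k$ and $\phi=\widehat\phi_k$. The hypothesis on the traces says $K_{\widehat\phi_k}(\tau_k)=K_{\chi_k}(\tau_k)$ and $K_{\widehat\phi_k}(\tau_{k+1})=K_{\chi_k}(\tau_{k+1})$. Hence \eqref{eq:local-identity} reads
\[
\W_k+\D_k=\Ek_k^--\Ek_k^++\Lk_k.
\]
Using $\W_k=\W_k^+-\W_k^-$, this becomes $\W_k^++\D_k=\Ek_k^--\Ek_k^++\Lk_k+\W_k^-\le\Ek_k^--\Ek_k^++|\Lk_k|+\W_k^-$. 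Multiply by $w_k>0$, sum over $k$, and bound the endpoint sum by \eqref{eq:endpoint-telescope}. This yields \eqref{eq:universal-chain}. Note that $\D_k\ge0$ is not needed here.

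\emph{Main obstacle.} The only delicate point is the direction of the endpoint mismatch. The outgoing energy of slab $k$ and the incoming energy of slab $k+1$ are measured with different cutoffs and different normalizations $r_k^{-1}$ and $r_{k+1}^{-1}$. Without weights, the factor $r_{k+1}^{-1}>r_k^{-1}$ would spoil the comparison. The weights $w_k=r_k/r_0$ remove these normalizations exactly. The nesting $\chi_{k+1}\le\chi_k$, which holds because $\chi$ is radial and nonincreasing and $r_{k+1}<r_k$, then gives the required sign. The remaining technical point is to evaluate $K$ at the common times $\tau_{k+1}$ consistently from both adjacent slabs. I would handle this with the $C_tL^2_x$ continuity from \Cref{lem:coarse-package}: it is exactly what lets the endpoint traces in \eqref{eq:local-identity} be pointwise values that are the same from both sides.
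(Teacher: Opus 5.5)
Your proposal is correct and follows the paper's proof essentially step for step. The weights $w_k=r_k/r_0$ cancel the $r_k^{-1}$ normalizations, the interface differences $K_{\chi_{k+1}}(\tau_{k+1})-K_{\chi_k}(\tau_{k+1})$ are nonpositive by the nesting $\chi_{k+1}\le\chi_k$, and the terminal energy is nonnegative; then the slab identities from \Cref{lem:local-identity} with the common traces are summed, and rearranging via $\W_k=\W_k^+-\W_k^-$ gives the chain bound.
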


\begin{proof}
Since \(w_kr_k^{-1}=r_0^{-1}\),
\begin{align*}
\sum_{k=0}^{N-1}w_k(\Ek_k^- -\Ek_k^+)
&=
\frac1{r_0}
\sum_{k=0}^{N-1}
\left[
K_{\chi_k}(\tau_k)-K_{\chi_k}(\tau_{k+1})
\right]
\\
&=
\frac1{r_0}
\Bigg[
K_{\chi_0}(\tau_0)
-
K_{\chi_{N-1}}(\tau_N)
\\
&\hspace{3.2cm}
+
\sum_{k=1}^{N-1}
\bigl(
K_{\chi_k}(\tau_k)-K_{\chi_{k-1}}(\tau_k)
\bigr)
\Bigg].
\end{align*}
By \eqref{eq:nesting}, every term in the final sum is nonpositive. The terminal kinetic energy is nonnegative. This proves \eqref{eq:endpoint-telescope}.

By \Cref{lem:local-identity},
\[
\W_k+\D_k
=
\Ek_k^- -\Ek_k^+ +\Lk_k.
\]
Multiply by \(w_k\), sum over every slab, and use \eqref{eq:endpoint-telescope}:
\[
\sum_k w_k(\W_k+\D_k)
\le
\Ek_0^-+\sum_k w_k\Lk_k.
\]
Since \(\W_k=\W_k^+-\W_k^-\), rearrangement and \(\Lk_k\le|\Lk_k|\) give \eqref{eq:universal-chain}.
\end{proof}

\begin{proof}[Proof of \Cref{thm:main}]
Apply \Cref{lem:extraction} on every slab to choose \(\widehat\phi_k\). These weights have the common endpoint traces \eqref{eq:common-endpoints}; hence \Cref{lem:telescoping} gives \eqref{eq:main-chain}. If \(k\in\mathcal I_+\), then \eqref{eq:extraction-main} implies
\[
\W_k^+=\W_k\ge c_k\Aact_k(G).
\]
If \(k\in\mathcal I_-\), then
\[
\W_k^-=-\W_k\ge c_k\Aact_k(G).
\]
Substitution into \eqref{eq:main-chain} proves \eqref{eq:main-active} and \eqref{eq:backscatter-main}.
\end{proof}

\begin{remark}[Why a forward-only subchain does not telescope]\label{rem:subset}
For an arbitrary set \(J\subset\{0,\ldots,N-1\}\), the sum
\[
\sum_{k\in J}w_k(\Ek_k^- -\Ek_k^+)
\]
need not be bounded by \(\Ek_0^-\). The negative intermediate terms that make \eqref{eq:endpoint-telescope} work may be absent. This is why the rigorous chain estimate sums all slabs and moves negative work to the explicit backscatter side.
\end{remark}

\begin{corollary}[No-backscatter finite-chain depletion]\label{cor:no-backscatter}
If every selected slab is in the forward branch, then
\begin{equation}\label{eq:no-backscatter}
\sum_{k=0}^{N-1}w_k
\left(
 c_k\Aact_k(G)+\D_k
\right)
\le
\Ek_0^-
+
\sum_{k=0}^{N-1}w_k|\Lk_k|.
\end{equation}
\end{corollary}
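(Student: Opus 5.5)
The proof is a specialization of \eqref{eq:main-active} in \Cref{thm:main}, with no new analytic input. Under the hypothesis that every selected slab is in the forward branch of \Cref{lem:extraction}, I will show that $\mathcal I_-=\emptyset$ and $\mathcal I_+=\{0,\ldots,N-1\}$, and then read off \eqref{eq:no-backscatter}.

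\textbf{Step 1: identify the index sets.} The hypothesis means that for each $k$ the selected weight $\widehat\phi_k=\phi_{k,\beta_k}$ satisfies
\[
\W_k=\W_k[\phi_{k,\beta_k}]\ge c_k\Aact_k(G)\ge0.
\]
Hence $\W_k^-=0$ and $\W_k^+=\W_k\ge c_k\Aact_k(G)$ for every $k$. It follows that every $k$ belongs to $\mathcal I_+$ and $\mathcal I_-$ is empty.

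\textbf{Step 2: substitute into the chain inequality.} I will insert these facts into \eqref{eq:main-chain}, or equivalently into \eqref{eq:main-active}. The backscatter sum $\sum_k w_k\W_k^-$ vanishes. On the left, I will use the termwise bound $w_k\W_k^+\ge w_kc_k\Aact_k(G)$, which holds because $w_k>0$. This yields exactly \eqref{eq:no-backscatter}.

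\textbf{Possible obstacle: the dichotomy in \Cref{lem:extraction}.} The lemma says ``exactly one'' of the forward and backscatter alternatives holds. This is not literally exclusive when $\Aact_k(G)=0$ and $\W_k=0$. The corollary's hypothesis, read as ``the forward inequality holds for every $k$'', sidesteps this, since Step 1 uses only the forward inequality.

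There is no analytic difficulty beyond this. Step 2 relies on \Cref{lem:telescoping}, whose common-endpoint requirement is supplied by \eqref{eq:common-endpoints}.
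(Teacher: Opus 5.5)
Your proposal is correct and matches the paper's intended argument. The paper gives no separate proof and treats the corollary as the immediate specialization of \eqref{eq:main-active}, equivalently \eqref{eq:main-chain}, to the case $\mathcal I_-=\emptyset$, which is exactly your Steps 1--2. Your side remark is also accurate and harmless: the ``exactly one'' dichotomy fails to be exclusive only when $\W_k=\Aact_k(G)=0$, and your argument never uses exclusivity.
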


\subsection{Local pressure decomposition and harmonic tails}\label{sec:pressure}

The first theorem is stated for the full pressure work, which is gauge invariant by \Cref{lem:gauge}. We now record how an active/harmonic split can be introduced without discarding physical harmonic pressure.

\begin{lemma}[Active/harmonic pressure split and harmonic-polynomial tail]\label{lem:harmonic-tail}
Fix a time interval \(I\) and concentric balls
\[
B_r(x_0)\subset B_R(x_0),
\qquad
0<r\le R/4,
\]
whose \(\ell\)-neighborhoods remain in the solution domain. Let \(\zeta\in C_c^\infty(B_{2R}(x_0))\) satisfy \(\zeta\equiv1\) on \(B_R(x_0)\), and define for almost every \(t\)
\begin{equation}\label{eq:active-pressure}
P^a(\cdot,t)
:=
\mathcal R_i\mathcal R_j
\left[
\zeta\bigl(U_iU_j+R_{ij}\bigr)
\right](\cdot,t),
\qquad
P^h:=P-P^a.
\end{equation}
Then \(P^h(\cdot,t)\) is harmonic in \(B_R(x_0)\). For every integer \(m\ge0\), there is a harmonic polynomial \(H_m(\cdot,t)\) of degree at most \(m\) such that
\begin{equation}\label{eq:harmonic-tail}
\norm{P^h(\cdot,t)-H_m(\cdot,t)}{L^{3/2}(B_r)}
\le
C_m
\left(\frac rR\right)^{m+3}
\norm{P^h(\cdot,t)-c(t)}{L^{3/2}(B_R)}
\end{equation}
for every scalar function \(c(t)\).

If \(\phi\) is supported in \(B_r(x_0)\times I\) and \(\norm{\nabla\phi}{L^\infty}\le M_\nabla r^{-1}\), then the harmonic-tail contribution to pressure work satisfies
\begin{align}
\mathcal T_{m}^{h}[\phi]
&:=
 r^{-1}
\int_I\int_{B_r}
|P^h-H_m|\,|U|\,|\nabla\phi|\dxdt
\label{eq:T-h-def}
\\
&\le
C_mM_\nabla
\left(\frac rR\right)^{m+3}
 r^{-2}
\norm{U}{L^3(I\times B_r)}
\norm{P^h-c(t)}{L^{3/2}(I\times B_R)}.
\label{eq:T-h-bound}
\end{align}
\end{lemma}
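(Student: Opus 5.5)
The plan is to prove the three assertions of \Cref{lem:harmonic-tail} in order: harmonicity of \(P^h\), the Taylor-type tail estimate \eqref{eq:harmonic-tail}, and then the work bound \eqref{eq:T-h-bound} by H\"older's inequality.

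\emph{Step 1: harmonicity.} Take the spatial divergence of the coarse momentum equation \eqref{eq:coarse-momentum} from \Cref{lem:coarse-package}. Since \(\nabla\cdot U=0\), the terms \(\partial_tU\) and \(\Delta U\) drop out in distributions. This gives, as a space-time distribution on \(I\times B_{R'}\) for some \(R'>R\) in the admissible region,
\[
-\Delta P=\partial_i\partial_j\bigl(U_iU_j+R_{ij}\bigr).
\]
By \Cref{lem:coarse-package}, \(\zeta(U\otimes U+R)\in L^{3/2}\), so Calder\'on--Zygmund boundedness gives \(P^a(\cdot,t)\in L^{3/2}(\R^3)\) for a.e. \(t\). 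The Riesz transforms have symbol \(-\xi_i\xi_j/|\xi|^2\). Hence the Fourier symbols give \(-\Delta P^a=\partial_i\partial_j[\zeta(U_iU_j+R_{ij})]\), and this equals the right-hand side above on \(B_R\), where \(\zeta\equiv1\). Therefore \(\Delta P^h=0\) as a space-time distribution on \(I\times B_R\).

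To pass to a.e. \(t\), test with products \(a(t)b(x)\), where \(b\) ranges over a countable dense subset of \(C_c^\infty(B_R)\). This gives \(\int P^h(x,t)\Delta b(x)\dx=0\) for all such \(b\) outside a single null set of times. By density the identity holds for every \(b\in C_c^\infty(B_R)\), and Weyl's lemma yields harmonicity of \(P^h(\cdot,t)\) in \(B_R\). I expect this to be the only delicate point: the pressure equation is purely spatial but is obtained from a space-time distributional identity, and the countable-family argument handles the passage between the two.

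\emph{Step 2: the tail estimate.} For a.e. \(t\), let \(H_m(\cdot,t)\) be the degree-\(m\) Taylor polynomial of \(P^h(\cdot,t)\) at \(x_0\). Each homogeneous part of the Taylor expansion of a harmonic function is harmonic, so \(H_m\) is a harmonic polynomial. Its coefficients are derivatives of \(P^h\) at \(x_0\), which are measurable in \(t\).

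Fix any scalar \(c(t)\). For \(m\ge0\), the Taylor remainder of \(P^h\) coincides with that of \(h:=P^h-c(t)\). Taylor's theorem and interior derivative estimates for harmonic functions (mean value property plus H\"older) give, for \(x\in B_r(x_0)\subset B_{R/4}(x_0)\),
\[
|P^h(x,t)-H_m(x,t)|\le C|x-x_0|^{m+1}\sup_{B_{R/2}}|\nabla^{m+1}h|\le C_m r^{m+1}R^{-m-1}R^{-2}\norm{h}{L^{3/2}(B_R)}.
\]
Taking the \(L^{3/2}(B_r)\) norm contributes a further factor \(|B_r|^{2/3}\sim r^2\), which yields exactly the factor \((r/R)^{m+3}\) in \eqref{eq:harmonic-tail}.

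\emph{Step 3: the work bound.} Use \(|\nabla\phi|\le M_\nabla r^{-1}\) and H\"older's inequality on \(I\times B_r\) with exponents \(3/2\) and \(3\). This gives
\[
\mathcal T^h_m[\phi]\le M_\nabla r^{-2}\norm{U}{L^3(I\times B_r)}\norm{P^h-H_m}{L^{3/2}(I\times B_r)}.
\]
Next raise \eqref{eq:harmonic-tail} to the power \(3/2\), integrate over \(I\), and take the \(2/3\) power. This bounds the last factor by \(C_m(r/R)^{m+3}\norm{P^h-c(t)}{L^{3/2}(I\times B_R)}\), which is \eqref{eq:T-h-bound}.
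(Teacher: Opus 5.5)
Your proposal is correct and follows essentially the same route as the paper: take the divergence of the coarse momentum equation to get $\Delta P^h=0$ on $B_R$, use the degree-$m$ Taylor polynomial at $x_0$ together with the interior derivative estimate on $B_{R/2}$ to get the factor $(r/R)^{m+3}$, then apply H\"older in space-time. Your countable-dense-family and Weyl's lemma argument for passing from the space-time identity to a.e. time slices, and your remark on the measurability of the coefficients of $H_m$, fill in details the paper leaves implicit.
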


\begin{proof}
Taking the divergence of \eqref{eq:coarse-momentum} gives
\begin{equation}\label{eq:pressure-poisson}
-\Delta P
=
\partial_i\partial_j(U_iU_j+R_{ij})
\end{equation}
in the interior. By the definition of the Riesz transforms,
\[
-\Delta P^a
=
\partial_i\partial_j
\left[
\zeta(U_iU_j+R_{ij})
\right].
\]
Since \(\zeta=1\) on \(B_R\), \eqref{eq:pressure-poisson} implies \(\Delta P^h=0\) there.

For a harmonic function \(h\) in \(B_R\), the interior \(L^p\)-derivative estimate with \(p=3/2\) gives
\begin{equation}\label{eq:harm-derivative}
\norm{\nabla^{m+1}h}{L^\infty(B_{R/2})}
\le
C_m R^{-(m+1)-2}
\norm{h-c}{L^{3/2}(B_R)}.
\end{equation}
Let \(H_m\) be the Taylor polynomial of \(h\) at \(x_0\) through degree \(m\). Because derivatives of a harmonic function are harmonic and the Taylor polynomial of a harmonic function is harmonic term by term, \(H_m\) is harmonic. Taylor's theorem and \eqref{eq:harm-derivative} yield, for \(x\in B_r\),
\[
|h(x)-H_m(x)|
\le
C_m r^{m+1}R^{-m-3}
\norm{h-c}{L^{3/2}(B_R)}.
\]
Multiplication by \(|B_r|^{2/3}\simeq r^2\) gives \eqref{eq:harmonic-tail}. Apply this at almost every time and then use H\"older's inequality in space-time together with \(|\nabla\phi|\le M_\nabla r^{-1}\) to obtain \eqref{eq:T-h-bound}.
\end{proof}

\begin{corollary}[Full pressure work versus active pressure work]\label{cor:pressure-split}
Under the assumptions of \Cref{lem:harmonic-tail},
\begin{align}
-r^{-1}\int_I\int PU\cdot\nabla\phi\dxdt
&=
-r^{-1}\int_I\int(P^a+H_m)U\cdot\nabla\phi\dxdt
+
\mathcal R_m^h[\phi],
\label{eq:pressure-work-split}
\\
|\mathcal R_m^h[\phi]|
&\le\mathcal T_m^h[\phi].
\label{eq:pressure-work-residual}
\end{align}
For \(m=0\), \(H_0(\cdot,t)\) is spatially constant and its work vanishes by \Cref{lem:gauge}. For \(m\ge1\), the low-order harmonic polynomial is a genuine pressure-work term and must be retained.
\end{corollary}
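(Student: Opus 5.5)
The identity \eqref{eq:pressure-work-split} is pure bookkeeping. I would write $P=P^a+P^h=(P^a+H_m)+(P^h-H_m)$ on $B_r(x_0)\times I$ and insert this into the pressure-work integral. Since $\phi$ is supported in $B_r(x_0)\times I$, every integral lives on that set. The residual is then defined as
\[
\mathcal R_m^h[\phi]:=-r^{-1}\int_I\int_{B_r}(P^h-H_m)\,U\cdot\nabla\phi\dxdt .
\]
This makes \eqref{eq:pressure-work-split} an identity by linearity of the pairing.

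Integrability of each piece needs to be checked so that the splitting is legitimate:
\begin{itemize}
\item $PU\in L^1_{\loc}$ by \Cref{lem:coarse-package}.
\item $P^a(\cdot,t)\in L^{3/2}$ by Calder\'on--Zygmund boundedness of $\mathcal R_i\mathcal R_j$, applied to $\zeta(U\otimes U+R)\in L^{3/2}$ via \eqref{eq:U-L3} and \eqref{eq:R-L32}. Hence $P^aU\in L^1$ by H\"older.
\item $H_m$ is a polynomial whose coefficients are controlled by \eqref{eq:harm-derivative}, which is in $L^{3/2}$ in time. Hence $H_mU\in L^1$ as well.
\end{itemize}

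The bound \eqref{eq:pressure-work-residual} follows by moving absolute values inside the integral: $|(P^h-H_m)U\cdot\nabla\phi|\le|P^h-H_m|\,|U|\,|\nabla\phi|$. This is exactly the integrand of $\mathcal T_m^h[\phi]$ in \eqref{eq:T-h-def}. Lemma~\ref{lem:harmonic-tail} then supplies the quantitative bound \eqref{eq:T-h-bound}, though it is not needed for the corollary itself.

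For $m=0$, the Taylor polynomial $H_0(\cdot,t)=P^h(x_0,t)$ is a function of time only. Its contribution $-r^{-1}\int_I H_0(t)\int U\cdot\nabla\phi\dx\dt$ therefore vanishes by the computation in \Cref{lem:gauge}: integrating by parts, $\int U\cdot\nabla\phi\dx=-\int\phi\,\nabla\cdot U\dx=0$ for a.e. $t$. The only thing to check is that $H_0\in L^{3/2}_{\loc}(I)$. This follows from the mean-value property and $P^h\in L^{3/2}$, which the lemma allows.

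For $m\ge1$, the final assertion is interpretive. I would justify it with an example. Take $H_1=a(t)\cdot(x-x_0)$; then
\[
\int U\cdot\nabla\phi\,H_1\dx=-\int\phi\,U\cdot a(t)\dx ,
\]
which is generally nonzero. So $H_m$ is not a gauge and must be retained in the work.

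The only mildly delicate point is measurability and integrability of $P^h$ and $H_m$ in $t$. I would handle it by noting that $P^h=P-P^a\in L^{3/2}$ on $I\times B_R$, so the Taylor coefficients are measurable in $t$ and bounded by $\norm{P^h(\cdot,t)-c(t)}{L^{3/2}(B_R)}$.
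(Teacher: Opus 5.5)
Your proposal is correct and matches the paper, which states the corollary without a separate proof as an immediate consequence of \Cref{lem:harmonic-tail}. As the paper intends, you write $P=(P^a+H_m)+(P^h-H_m)$, take $\mathcal R_m^h[\phi]$ to be the work of the last piece, move absolute values inside to get $\mathcal T_m^h[\phi]$, use the divergence-free computation of \Cref{lem:gauge} for $m=0$, and use a linear-polynomial example (as in \Cref{ex:harmonic-pressure}) to show why $H_m$ must be retained for $m\ge1$.

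One small slip in your last paragraph: the zeroth-order Taylor coefficient $P^h(x_0,t)$ is controlled by $\norm{P^h(\cdot,t)}{L^{3/2}(B_R)}$ through the mean-value property, as you correctly say in the $m=0$ step, and not by the oscillation $\norm{P^h(\cdot,t)-c(t)}{L^{3/2}(B_R)}$. This is harmless, since $P^h\in L^{3/2}(I\times B_R)$.
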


\begin{remark}
The local decomposition \eqref{eq:active-pressure} is a standard Calder\'on--Zygmund/harmonic decomposition. Local pressure formulations and the distinction between temporal gauges and spatially harmonic pressure are discussed.
\end{remark}

\section{A Conditional Detector-to-CKN Interface}\label{sec:conditional-interface}

We now record the precise way in which CKN resolution and work depletion fit together.  The point is not to assert an unconditional detector theorem, but to state the exact implication that would follow once a separate coarse observability estimate is available.

Let \(\Aact_k(G^\ell)\) be a finite-dimensional active coefficient norm of the form used in \Cref{sec:work-depletion}.  A coarse detector observability statement at scale \(r_k\) would have the form
\begin{equation}\label{eq:coarse-to-active-observability}
\Psi^\ell(z_0,r_k)\ge c_0\eps_0
\quad\Longrightarrow\quad
\Aact_k(G^\ell)\ge a_0,
\end{equation}
for constants \(c_0,a_0>0\) on a specified compact or admissible class of coarse-grained packages.  This implication is not a consequence of the resolution lemma or of the local energy identity.  It is a separate compactness-separation problem for the signed distribution \(G^\ell\).

\begin{theorem}[Resolution plus work-depletion template]\label{thm:resolution-work-template}
Fix a finite chain and a common coarse length \(\ell>0\) as in \Cref{sec:work-depletion}.  Let \(\eps_0>0\), let \(0<\eta<1/4\), and set \(c_0=1/4-\eta\).  Suppose that for every scale \(r_k\) under consideration the coarse observability implication \eqref{eq:coarse-to-active-observability} holds with the active norm used in \Cref{thm:main}.

If
\[
\Psi(z_0,r_k)\ge \eps_0
\qquad\text{and}\qquad
\Omega^\ell(z_0,r_k)\le \eta\eps_0,
\]
then the selected work weight \(\widehat\phi_k\) from \Cref{thm:main} satisfies
\begin{equation}\label{eq:bad-scale-work-alternative}
|\W_k[\widehat\phi_k]|\ge c_k a_0.
\end{equation}
Equivalently, every residual-small full CKN-bad scale lies in one of two branches:
\[
\W_k^+\ge c_k a_0
\qquad\text{or}\qquad
\W_k^-\ge c_k a_0.
\]
Moreover the selected work quantities obey the finite-chain inequality
\begin{equation}\label{eq:template-chain-bound}
\sum_{k=0}^{N-1} w_k(\W_k^+ + \D_k)
\le
\Ek_0^-+
\sum_{k=0}^{N-1}w_k|\Lk_k|+
\sum_{k=0}^{N-1}w_k\W_k^-.
\end{equation}
\end{theorem}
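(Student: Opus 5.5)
The plan is to chain Corollary~\ref{cor:coarse-visibility}, the assumed observability implication \eqref{eq:coarse-to-active-observability}, and Theorem~\ref{thm:main}. Nothing new about the Navier--Stokes equations is needed. The one point to watch is that every object refers to the same package: the same common coarse length \(\ell\), the same active profiles and matrices \(B_k\), and the same selected index \(\beta_k\). The hypothesis of the template already requires the active norm in \eqref{eq:coarse-to-active-observability} to be the one used in Theorem~\ref{thm:main}, so this is a consistency check rather than an estimate.

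\textbf{Step 1 (resolution).} Fix \(k\) with \(\Psi(z_0,r_k)\ge\eps_0\) and \(\Omega^\ell(z_0,r_k)\le\eta\eps_0\). Corollary~\ref{cor:coarse-visibility}, applied at the cylinder \(Q_{r_k}(z_0)\), gives \(\Psi^\ell(z_0,r_k)\ge(1/4-\eta)\eps_0=c_0\eps_0\). Note that the resolution lemma is purely algebraic, so it holds at every center and radius on which the quantities are finite. Here finiteness follows from the local \(L^3\times L^{3/2}\) integrability of a suitable weak solution on the chain region, together with \eqref{eq:U-L3}--\eqref{eq:P-L32}.

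\textbf{Step 2 (observability).} The hypothesis \eqref{eq:coarse-to-active-observability} at scale \(r_k\) now yields \(\Aact_k(G^\ell)\ge a_0\).

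\textbf{Step 3 (extraction).} Theorem~\ref{thm:main}, in its extraction estimate \eqref{eq:extraction-main}, supplies an index \(\beta_k\) with \(|\W_k[\widehat\phi_k]|\ge c_k\Aact_k(G^\ell)\). This index is chosen independently of the CKN hypotheses; it comes from Lemma~\ref{lem:extraction}. Since \(c_k>0\), combining this with Step 2 gives \(|\W_k[\widehat\phi_k]|\ge c_ka_0\), which is \eqref{eq:bad-scale-work-alternative}. Writing \(|\W_k|=\W_k^++\W_k^-\), at most one of the two parts is nonzero. Hence either \(\W_k^+\ge c_ka_0\) or \(\W_k^-\ge c_ka_0\), which is the stated dichotomy.

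\textbf{Step 4 (chain bound).} Inequality \eqref{eq:template-chain-bound} is exactly \eqref{eq:main-chain} for the same selected weights. It is unconditional, holds for all slabs simultaneously, and does not use the CKN hypotheses at all. I would simply cite Theorem~\ref{thm:main}, which in turn rests on Lemma~\ref{lem:telescoping} and the common endpoint traces \eqref{eq:common-endpoints}.

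There is no genuine analytic obstacle here. The only step needing care is Step 3, specifically confirming that the selection \(\beta_k\) in Theorem~\ref{thm:main} is made once, independently of the case analysis. That is what lets the lower bound on bad scales and the chain inequality be stated for the same weights \(\widehat\phi_k\). It follows directly from the construction in Lemma~\ref{lem:extraction}, which depends only on the coefficient vector \(g_k\).
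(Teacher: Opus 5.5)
Your proposal is correct and follows the paper's proof essentially step for step: \Cref{cor:coarse-visibility} gives $\Psi^\ell(z_0,r_k)\ge c_0\eps_0$, the observability hypothesis gives $\Aact_k(G^\ell)\ge a_0$, the extraction bound \eqref{eq:extraction-main} then gives $|\W_k[\widehat\phi_k]|\ge c_ka_0$, and \eqref{eq:template-chain-bound} is quoted directly from \eqref{eq:main-chain}. One small inaccuracy in your closing remark: the index $\beta_k$ in \Cref{lem:extraction} depends on all the measured values $\W_k[\phi_{k,\alpha}]$, including $\W_k[\phi_{k,0}]$, and not only on $g_k$; what the argument actually needs is that $\beta_k$ is fixed once, independently of the CKN case split, and that part of your remark is correct.
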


\begin{proof}
The resolution lemma, \Cref{cor:coarse-visibility}, gives
\[
\Psi^\ell(z_0,r_k)\ge \left(\frac14-\eta\right)\eps_0=c_0\eps_0
\]
whenever \(\Psi(z_0,r_k)\ge\eps_0\) and \(\Omega^\ell(z_0,r_k)\le\eta\eps_0\).  The assumed coarse observability implication \eqref{eq:coarse-to-active-observability} then gives \(\Aact_k(G^\ell)\ge a_0\).  The active extraction part of \Cref{thm:main} gives
\[
|\W_k[\widehat\phi_k]|\ge c_k\Aact_k(G^\ell)\ge c_k a_0,
\]
which is \eqref{eq:bad-scale-work-alternative}.  The branch alternative is just the decomposition into positive and negative parts.  Finally, \eqref{eq:template-chain-bound} is exactly the weighted telescoping estimate in \Cref{thm:main}.
\end{proof}

\begin{remark}[What remains open]
The assumption \eqref{eq:coarse-to-active-observability} is the hard missing bridge.  Large \(\Psi^\ell\) may still be invisible to a finite signed detector because of pressure--flux cancellation, poor alignment with the chosen active profiles, harmonic pressure tails, or coherent resolved fields with little localized work.  The theorem above is therefore best read as a clean interface statement: once coarse observability is supplied, the rest of the finite-chain ledger is already available from the exact resolved energy identity.
\end{remark}

\section{Diagnostic Examples and Scaling Ledger}\label{sec:diagnostics}

\subsection{A spatially constant velocity with harmonic pressure}

\begin{example}[Harmonic pressure performs real local work]\label{ex:harmonic-pressure}
Let \(a\in C^1(I;\R^3)\), and set
\[
U(x,t)=a(t),
\qquad
P(x,t)=-a'(t)\cdot x,
\qquad
R=0.
\]
Then
\[
\partial_tU+\nabla P=0,
\qquad
\nabla\cdot U=0,
\]
so this is a smooth local Navier--Stokes solution. The pressure is spatially harmonic and \(\Pi=0\), but for a spatial cutoff \(\chi\),
\[
\int_I\int\chi G\dxdt
=
\int_I\int\chi\,\nabla\cdot(PU)\dxdt
=
-\int_I a(t)\cdot a'(t)\dt\int\chi\dx.
\]
This is exactly the localized kinetic-energy change. Thus harmonic pressure cannot be removed as a gauge.
\end{example}

\subsection{Forward transfer and backscatter are both algebraically possible}

Even when \(R\ge0\), the sign of \(\Pi=-R:\nabla U\) is not fixed. For example, with
\[
R=e_1\otimes e_1,
\]
a divergence-free linear gradient \(\nabla U=\operatorname{diag}(-1,1,0)\) gives \(\Pi=1\), while \(\nabla U=\operatorname{diag}(1,-1,0)\) gives \(\Pi=-1\). These are algebraic sign tests for the coarse equation; they are not assertions that an arbitrary prescribed pair \((U,R)\) is generated by a Navier--Stokes solution. Their role is to show why the theorem must contain a backscatter alternative.

\subsection{Pressure--flux cancellation}

If the two channels in \eqref{eq:channels} have opposite signs, their magnitudes may be large while \(\W\) is small. The quantity \(\Cpf\) in \eqref{eq:cancellation} records exactly this loss. The active detector in this paper is a detector for the combined distribution \(G\), not for the sum of absolute values of its pressure and flux components. Therefore no estimate of the form
\[
\int|\Pi|+\int|P|^{3/2}
\lesssim
\text{endpoint kinetic-energy drop}
\]
is claimed.

\subsection{Scale-invariant normalization ledger}

\begin{longtable}{>{\raggedright\arraybackslash}p{0.25\textwidth}
                  >{\raggedright\arraybackslash}p{0.47\textwidth}
                  >{\centering\arraybackslash}p{0.12\textwidth}}
\toprule
Quantity & Definition & Scaling degree \\
\midrule
\endfirsthead
\toprule
Quantity & Definition & Scaling degree \\
\midrule
\endhead
Endpoint energy
& \(r^{-1}\int\frac12|U|^2\chi\dx\)
& \(0\) \\
Resolved dissipation
& \(r^{-1}\int\phi|\nabla U|^2\dxdt\)
& \(0\) \\
Combined work
& \(r^{-1}\int(\phi\Pi-PU\cdot\nabla\phi)\dxdt\)
& \(0\) \\
Temporal/spatial energy leakage
& \(r^{-1}\int\frac12|U|^2(\partial_t\phi+\Delta\phi)\dxdt\)
& \(0\) \\
Convective leakage
& \(r^{-1}\int\frac12|U|^2U\cdot\nabla\phi\dxdt\)
& \(0\) \\
Stress-transport leakage
& \(r^{-1}\int(RU)\cdot\nabla\phi\dxdt\)
& \(0\) \\
Active coefficient
& \(r^{-1}\pair{G}{\chi\psi^\sharp}\)
& \(0\) \\
Harmonic-tail residual
& right side of \eqref{eq:T-h-bound}
& \(0\) \\
\bottomrule
\end{longtable}

\section{Conclusion}\label{sec:conclusion}

We have proved a finite-scale coarse-grained obstruction calculus for the local Navier--Stokes regularity problem.  The first component is the unconditional CKN resolution estimate
\[
\Psi(r)\le4\Psi^\ell(r)+4\Omega^\ell(r),
\]
which separates full local badness into a resolved velocity--pressure contribution and an explicit subfilter residual.  The second component is an exact finite-chain depletion theorem for the combined pressure--flux work distribution
\[
G^\ell=\Pi^\ell+\nabla\cdot(P^\ell U^\ell).
\]
It shows that finite-dimensional detected resolved work obeys a localized energy payment law with explicit leakage and backscatter.

\end{document}